\newcommand*{\da@rightarrow}{\mathchar"0\hexnumber@\symAMSa 4B }
\newcommand*{\da@leftarrow}{\mathchar"0\hexnumber@\symAMSa 4C }
\newcommand*{\xdashrightarrow}[2][]{%
\mathrel{%
\mathpalette{\da@xarrow{#1}{#2}{}\da@rightarrow{\,}{}}{}%
}%
}
\newcommand{\xdashleftarrow}[2][]{%
\mathrel{%
\mathpalette{\da@xarrow{#1}{#2}\da@leftarrow{}{}{\,}}{}%
}%
}
\newcommand*{\da@xarrow}[7]{%
\sbox0{$\ifx#7\scriptstyle\scriptscriptstyle\else\scriptstyle\fi#5#1#6\m@th$}%
\sbox2{$\ifx#7\scriptstyle\scriptscriptstyle\else\scriptstyle\fi#5#2#6\m@th$}%
\sbox4{$#7\dabar@\m@th$}%
\dimen@=\wd0 %
\ifdim\wd2 >\dimen@
\dimen@=\wd2 %
\fi
\count@=2 %
\def\da@bars{\dabar@\dabar@}%
\@whiledim\count@\wd4<\dimen@\do{%
\advance\count@\@ne
\expandafter\def\expandafter\da@bars\expandafter{%
\da@bars
\dabar@ 
}%
}%
\mathrel{#3}%
\mathrel{%
\mathop{\da@bars}\limits
\ifx\\#1\\%
\else
_{\copy0}%
\fi
\ifx\\#2\\%
\else
^{\copy2}%
\fi
}%
\mathrel{#4}%
}
\newcommand{\comp}{\mathbin{\scriptstyle{\circ}}}
\newcommand{\B}{{\mathbf{B}}}
\newcommand{\Prm}{{\mathrm{P}}}
\newcommand{\QQ}{\mathbb{Q}}
\newcommand{\ZZ}{\mathbb{Z}}
\newcommand{\PP}{\mathbb{P}}
\newcommand{\OOO}{{\mathscr{O}}} 
\newcommand{\NNN}{{\mathscr{N}}} 
\newcommand{\MMM}{{\mathscr{M}}}
\newcommand{\qq}{\sim_{\scriptscriptstyle{\QQ}}}
\newcommand{\qW}{\operatorname{q_{\scriptscriptstyle{\mathrm{W}}}}}
\newcommand{\qQ}{\operatorname{q_{\scriptscriptstyle{\QQ}}}}
\newcommand{\Supp}{\operatorname{Supp}}
\newcommand{\Bs}{\operatorname{Bs}}
\newcommand{\Cl}{\operatorname{Cl}}
\newcommand{\Clt}[1]{\operatorname{Cl}(#1)_{\mathrm{T}}}
\newcommand{\rk}{\operatorname{rk}}
\newcommand{\Tors}{{\operatorname{Tors}}}
\newcommand{\dd}{\operatorname{d}}
\newcommand{\df}{\operatorname{df}}
\newcommand{\ct}{\operatorname{ct}}
\newcommand{\xref}[1]{\textup{\ref{#1}}}
\theoremstyle{plain}
\newtheorem{theorem}[subsection]{Theorem}
\newtheorem{stheorem}[equation]{Theorem}
\newtheorem{slemma}[equation]{Lemma}
\newtheorem{proposition}[subsection]{Proposition}
\newtheorem{sproposition}[equation]{Proposition}
\newtheorem{scorollary}[equation]{Corollary}
\newtheorem{sclaim}[equation]{Claim}
\theoremstyle{definition}
\newtheorem{sdefinition}[equation]{Definition}
\newtheorem{assumptions}[subsection]{Assumptions}
\newtheorem{sremark}[equation]{Remark}
\newcounter{NN}\numberwithin{NN}{section}
\title{Conic bundle structures on {$\QQ$}-Fano threefolds}
\author{Yuri Prokhorov}
\thanks{The paper was partially supported by the HSE University Basic Research Program.
}
\address{
\newline
\textnormal{Steklov Mathematical Institute of RAS,
8 Gubkina street, Moscow 119991, Russia.
}
\newline
\textnormal{
HSE University, 
Laboratory of Algebraic Geometry, 6 Usacheva str., Moscow, 119048, Russia.
}
\newline
\textnormal{
Faculty of Mechanics and Mathematics,
Moscow State University, Russia.
}
}
\email{prokhoro@mi-ras.ru}
\begin{document}

\maketitle

\section{Introduction}
A \textit{conic bundle} is a proper flat morphism $\pi: X \to Z$ of non-singular varieties such that it is of relative dimension 1 and the anticanonical divisor
$-K_X$ is relatively ample. 
We say that a variety $X$ has a conic bundle structure if there exists
a conic bundle $\pi: X'\to Z$ and a birational map $X \dashrightarrow X'$.

Varieties with conic bundle structure play a very important role in the 
birational classification of algebraic varieties of negative Kodaira dimension.
For example, any variety with rational curve fibration has a conic bundle structure \cite{Sarkisov:82e}. 
For these varieties there are well-developed techniques to solve rationality problems \cite{Sarkisov:82e}, \cite{Shokurov:Prym}, \cite{Iskovskikh:Duke}, \cite{P:rat-cb:e}.

Another important class of varieties of negative Kodaira dimension is
the class of \textit{$\QQ$-Fano varieties}. Recall that a projective variety $X$ is called $\QQ$-Fano if it has only terminal $\QQ$-factorial singularities,
the Picard number $\uprho(X)$ equals $1$, and the anticanonical class 
$-K_X$ is ample.
In fact, these two classes overlap. Moreover, $\QQ$-Fano varieties
with conic bundle structures form a large subclass in the class of all $\QQ$-Fano varieties. It is very important for birational geometry to 
investigate and describe those $\QQ$-Fano varieties that do not have conic bundle structures. This paper is an attempt to do it in dimension three. 

To formulate the main result we have to recall some definitions.
A normal $n$-dimensional variety $X$ with only canonical Gorenstein singularities is called a \textit{del Pezzo variety} if there exists an ample Cartier divisor $A$ such that $-K_X=(n-1)A$
(see \cite{Fujita:book}).
Then the intersection number $\dd(X):=A^n$ is called the \textit{degree} of $X$. 
We consider only those del Pezzo varieties that have at worst terminal singularities.
Del Pezzo varieties of degree $\le 2$ have special names:

\begin{enumerate}
\item 
a del Pezzo variety $X$ of degree $1$ is called \textit{double Veronese cone};
\item 
a del Pezzo variety of degree $2$ is called \textit{quartic double solid}.
\end{enumerate}

Recall that the Fano index $\qQ(X)$ of a $\QQ$-Fano variety is the maximal integer that divides the canonical class $K_X$ in the torsion free group $\Cl(X)/\Tors$
(see \ref{not:Q-Fano}). 
The following fact was proved in \cite{P:2019:rat:Q-Fano}:
\begin{theorem}
\label{thm:rat}
Let $X$ be a $\QQ$-Fano threefold with $\qQ(X)>7$.
Then $X$ is rational. In particular, $X$ has a lot of conic bundle structures.
\end{theorem}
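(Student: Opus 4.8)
The plan is to combine the classification of $\QQ$-Fano threefolds of large Fano index with explicit birational constructions performed family by family. First I would recall that the hypothesis $\qQ(X)>7$ is extremely restrictive: the Fano index of a terminal $\QQ$-Fano threefold is bounded (by $19$), and for $\qQ(X)>7$ the possible values of the index together with the admissible Hilbert series and baskets of singularities form a short explicit list. In each case $X$ can be reconstructed from its graded anticanonical ring and realized as a member of a weighted complete intersection family; several of these families are the weighted projective spaces $\PP(a_0,a_1,a_2,a_3)$ themselves. It therefore suffices to prove rationality for each member of this finite list.

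Weighted projective spaces are toric, hence rational, which disposes of those families immediately; the same applies to any $X$ admitting a degree-one toric or birational description onto such a space. For the remaining families, realized as weighted hypersurfaces $X_d\subset\PP(a_0,\dots,a_4)$ or weighted complete intersections, the main tool I would use is linear projection in the weighted setting. The identity $\sum a_i-d=\qQ(X)$ with $\qQ(X)$ large typically forces, in the defining equation, a high-weight variable $x_i$ entering quasi-linearly, i.e. a monomial $x_ix_j$ with $a_i+a_j=d$ and $a_j$ as small as possible. Solving for $x_i$ then exhibits a birational map $X\dashrightarrow\PP(a_0,\dots,\widehat{a_i},\dots,a_4)$ onto a weighted projective $3$-space, which is rational. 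Geometrically this is nothing but projection from the cyclic quotient singular point $P_i$ associated with $x_i$.

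When no variable appears quasi-linearly, the defining equation is at worst quadratic in a suitable variable, and the induced projection realizes $X$ birationally as a conic (or quadric) bundle over a rational base, or as a del Pezzo fibration; rationality then follows once one produces a rational section, or a rational multisection of odd degree, which the special numerics again supply. The main obstacle is precisely this last type of case: one has to check that the projection really is birational, controlling its indeterminacy and the terminal cyclic quotient singularities lying on the center of projection, and then verify that the resulting fibration is not a nontrivial twist, i.e. that it genuinely admits a section. Granting rationality, the concluding assertion is immediate: a rational threefold is birational to $\PP^3$, and $\PP^3$ carries infinitely many pairwise non-equivalent conic bundle structures, so $X$ indeed has a lot of conic bundle structures.
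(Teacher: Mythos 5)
Note first that the paper does not actually contain a proof of Theorem~\ref{thm:rat}: it is imported from \cite{P:2019:rat:Q-Fano}, so your proposal has to be measured against the argument given there, whose machinery is reproduced in Section~\ref{sect-constr} of the present paper and illustrated by its final proposition.

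The genuine gap is at your very first step, and it is fatal in the hardest case. You assume that every $X$ with $\qQ(X)>7$ is known to belong to a short explicit list of weighted projective spaces and weighted complete intersection families, ``reconstructed from its graded anticanonical ring''. Such a complete geometric classification is available for $\qQ(X)\ge 9$ (those $X$ are weighted projective spaces or weighted hypersurfaces and are visibly rational), but for $\qQ(X)=8$ only the \emph{numerical} invariants --- baskets and Hilbert series --- are pinned down to a finite list of candidates; for several of these neither existence nor a weighted-complete-intersection model is known, so there are no defining equations for your quasi-linear-variable projections to act on. This is exactly why the cited proof proceeds differently: one picks a small linear system $|kA|$, performs the Kawamata blowup (Theorem~\ref{theorem-Kawamata-blowup}) at a non-Gorenstein point, runs the Sarkisov link \eqref{diagram-main}, and exploits the numerical identity \eqref{equation-main} to force the output to be either a manifestly rational Mori fibre space or a $\QQ$-Fano of strictly larger index, to which the classification then applies. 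A second, independent gap: in your fallback case you reduce $X$ to a conic bundle or del Pezzo fibration over a rational base and assert that ``the special numerics again supply'' a rational section or an odd-degree multisection. Nothing in your setup produces one, and a conic bundle over a rational surface is in general irrational; this is precisely the hard point of rationality questions for fibred threefolds and cannot be deferred to an unverified claim. (Your concluding step --- rational threefolds carry many conic bundle structures --- is fine.)
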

For a normal $\QQ$-factorial projective variety $X$, denote 
\[
\df(X):= \max \big\{ \dim |M| \, \big| \, \text{$M$ is a Weil divisor such that $-(K_X+M)$ is ample} \big\}.
\]
Our main result is the following

\begin{theorem}
\label{thm:cbQf}
Let $X$ be a $\QQ$-Fano threefold with $\qQ(X)>1$.
Assume that $X$ has no conic bundle structures.
Then we have.
\begin{enumerate}
\item \label{thm:cbQf:0}
$\df(X)\le 3$.
\item 
\label{thm:cbQf-a}
If $\df(X)=3$, then $X$ is a smooth quartic double solid.
\item 
\label{thm:cbQf-b}
If $\df(X)=2$, then there exists a birational transformation $\Psi:X \dashrightarrow X'$ such that one of the following holds:
\begin{itemize}
\item 
$X'$ is a smooth quartic double solid, 
\item 
$X'$ is a smooth double Veronese cone and $\Psi$ is an isomorphism, or
\item 
$X'$ is a double Veronese cone with terminal $\QQ$-factorial singularities and 
$X'$ is singular.
\end{itemize}
\end{enumerate}
\end{theorem}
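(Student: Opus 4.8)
The plan is to study the complete linear system that computes $\df(X)$ and the rational map it defines, and to use the absence of a conic bundle structure to eliminate every case in which the general fibre of that map is a rational curve. Since $\uprho(X)=1$ and $X$ is $\QQ$-factorial, the group $\Cl(X)/\Tors$ is infinite cyclic; let $A$ be its ample generator, so that $-K_X\qq\qQ(X)\,A$ with $\qQ(X)\ge 2$. I would first fix a Weil divisor $M$ with $-(K_X+M)$ ample and $\dim|M|=\df(X)$, and replace $M$ by the mobile part of $|M|$; because $\uprho(X)=1$, removing the effective fixed part only increases the coefficient of $A$, so $-(K_X+M)$ stays ample while $\dim|M|$ is unchanged. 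Writing $M\qq m\,A$, ampleness of $-(K_X+M)\qq(\qQ(X)-m)\,A$ forces $1\le m\le\qQ(X)-1$, the extremal choice being $M\qq -K_X-A$. Let $\phi=\phi_{|M|}\colon X\dashrightarrow Y\subseteq\PP^{d}$ be the associated map, $d=\df(X)$, and resolve its base locus to obtain $\psi\colon\widetilde X\to X$, a morphism $\widetilde\phi\colon\widetilde X\to Y$, and a free system $\widetilde{\mathcal M}=\psi^{*}|M|-\sum a_iE_i$.

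The heart of the argument is a trichotomy on $\dim Y$. If $\dim Y=2$, then $\widetilde\phi$ is a fibration with $1$-dimensional general fibre $\widetilde F$; writing $K_{\widetilde X}=\psi^{*}K_X+\sum b_iE_i$ and using the ampleness of $-(K_X+M)$ I would compute $K_{\widetilde X}\cdot\widetilde F$ and conclude that either $\widetilde F$ is rational --- whence $\widetilde\phi$ is a rational curve fibration and $X$ carries a conic bundle structure, a contradiction --- or $\g(\widetilde F)\ge 1$, in which case a finer numerical analysis should force $\g(\widetilde F)=1$ and an elliptic fibration over $Y\cong\PP^{2}$. If $\dim Y\le 1$, the same adjunction computation gives (for $d>0$) a fibration over $\PP^{1}$ whose general fibre $F$ satisfies $-K_F$ ample, i.e.\ a del Pezzo fibration, which must likewise have non-rational generic fibre on pain of yielding a conic bundle structure. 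Finally, if $\dim Y=3$ then $\phi$ is generically finite of degree $\delta$; if $\delta=1$ then $\phi$ is birational and, for the degrees that actually occur, $Y$ and hence $X$ are rational, forcing a conic bundle structure, so in fact $\delta\ge 2$, and the double-cover structure together with $-(K_X+M)\qq A$ should identify $Y\cong\PP^{3}$ and realize $X$ as a quartic double solid.

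With the possibilities narrowed to a double cover of $\PP^{3}$, an elliptic fibration over $\PP^{2}$, or a del Pezzo fibration over $\PP^{1}$, the three assertions follow. The bound $\df(X)\le 3$ of \xref{thm:cbQf:0} comes from estimating $h^{0}(X,M)$ for the extremal $M\qq -K_X-A$ by Riemann--Roch and Kawamata--Viehweg vanishing, the value $d=3$ being attained only by the double cover of $\PP^{3}$; this already gives \xref{thm:cbQf-a}, with $X$ a smooth quartic double solid. For \xref{thm:cbQf-b}, the case $d=2$ is an elliptic or del Pezzo fibration, and running the Sarkisov program on it produces a birational $\Psi\colon X\dashrightarrow X'$ whose target is a smooth quartic double solid, the smooth double Veronese cone (with $\Psi$ an isomorphism), or a singular $\QQ$-factorial terminal double Veronese cone, according to the numerical type.

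The step I expect to be hardest is the borderline regime $\dim Y\in\{2,3\}$ with $\g(\widetilde F)\ge 1$ or $\delta\ge 2$: there the absence of a conic bundle structure is not formal, since an elliptic or del Pezzo fibration need not be birational to a conic bundle, and one must instead extract the del Pezzo structure precisely and then appeal to the classification of $\QQ$-Fano threefolds of Fano index $2\le\qQ(X)\le 7$ --- the range left open by Theorem \xref{thm:rat} --- to confirm that the only varieties surviving all the contradictions are the quartic double solid and the double Veronese cone.
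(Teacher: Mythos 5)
Your proposal takes a genuinely different route (a trichotomy on the dimension of the image of $\phi_{|M|}$), but it has gaps that I do not see how to close. First, the branch $\dim Y=2$ with $\g(\widetilde F)\ge 1$ is a dead end: you promise that ``a finer numerical analysis'' forces an elliptic fibration over $\PP^2$, but you never derive anything from that case --- an elliptic fibration neither contradicts the absence of conic bundle structures nor leads to a quartic double solid. The paper avoids this entirely by not analysing $\phi_{|M|}$ at all: it runs a $(K_X+c\MMM)$-Sarkisov program (an extremal log crepant blowup followed by the $(K+c\MMM)$-MMP, iterated until the pair becomes terminal), and in that framework a fibration over a curve has del Pezzo fibers $F$ with $-K_F=\lambda\bar\MMM|_F+\delta\bar E|_F$, $\lambda>1$, which forces $F\cong\PP^2$ or $\PP^1\times\PP^1$ and hence rationality --- there is no elliptic branch to dispose of. Likewise, the bound $\df(X)\le 3$ cannot be extracted from Riemann--Roch and vanishing for $M\qq -K_X-A$ on $X$ itself (for $\QQ$-Fanos of large index such systems can be large); in the paper it falls out only after the untwisting, because the terminal pair $(X',\MMM')$ satisfies $\MMM'\subset\bigl|-\frac12K_{X'}\bigr|$ on a del Pezzo threefold of degree $\dd(X')\le 2$, where $\dim\bigl|-\frac12K_{X'}\bigr|=\dd(X')+1\le 3$. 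The identification of the possible terminal models rests on the Campana--Flenner theorem plus an explicit elimination of the degree-$6$ hypersurface in $\PP(1^2,2^2,3)$, the cubic, and the singular quartic double solid (each of which visibly carries a rational curve fibration); your appeal to ``the classification of $\QQ$-Fano threefolds of Fano index $2\le\qQ(X)\le 7$'' does not substitute for this.

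The most serious omission concerns part \xref{thm:cbQf-a}. Your argument, at best, shows that $X$ is \emph{birational} to a smooth quartic double solid when $\df(X)=3$; the theorem asserts that $X$ \emph{is} one. Proving that the last Sarkisov link $X^{(n-1)}\dashrightarrow X^{(n)}$ must be an isomorphism is the bulk of the paper: one shows that a nontrivial link onto a smooth quartic double solid carrying a $3$-dimensional system cannot exist, via the relation $2b=q+ad$, the structure of $\Cl(X)$ and its torsion, the numerical constraints \eqref{equation-main}, Kawamata's description of divisorial contractions to quotient points, and the computer-assisted lists of Propositions \xref{prop:search1} and \xref{prop:search2}, case by case over $\qQ(X)\in\{3,\dots,7\}$. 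Your proposal asserts the conclusion (``this already gives \xref{thm:cbQf-a}'') without any mechanism for ruling out a nontrivial birational modification, so this step is missing rather than merely sketched.
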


Note that a \textit{smooth} double Veronese cone has no conic bundle structures according to \cite{Grinenko:V1MFS}. Existence of conic bundle structures on smooth quartic double solids
and singular double Veronese cones is not known.

\subsection*{Acknowledgments.}
The author
would like to thank the referees for useful comments and
suggestions.

\section{Preliminaries}
\subsection{Notation and terminology}
In this paper we always assume that the ground field $\Bbbk$ is algebraically closed and of characteristic zero. 
Also, we use the standard notations and definitions of the Minimal Model Program (MMP).
When we say that a variety $X$ has terminal (resp. canonical) singularities,
this means that the singularities are not worse than that, in particular,
$X$ can be smooth.

For a variety $X$ with terminal singularities $\B(X)$ denotes the basket 
of its singularities (see \cite{Reid:YPG}).
Typically, when describing a basket we list only indices of singularities.
For example, $\B(X)=(r_1,r_2)$ means that $\B(X)$ contains two points
of types $\frac 1{r_1}(1,-1,b_1)$ and $\frac 1{r_2}(2,-2,b_2)$.
For any normal variety $X$, $\Cl(X)$ denotes the group of Weil divisors on $X$ modulo linear equivalence. By $\Clt{X}$ we denote the torsion subgroup of $\Cl(X)$. If $\MMM$ is a linear system, then $\Bs(\MMM)$ denotes its base locus.

\subsection{Singularities of linear systems}
\begin{sdefinition}
Let $X$ be a normal variety and let $\MMM$ be a (not necessarily complete) linear system of Weil divisors. We assume that $\dim(\MMM)>0$ and $\MMM$ has no fixed components. Let $\mu$ be non-negative rational number such that $K_X +\mu\MMM$ is
$\QQ$-Cartier. For a birational morphism $f:\tilde X\to X$, write 
\[
K_{\tilde X}+\mu \tilde \MMM = K_{X}+\mu \MMM +\sum e_i E_i,
\]
where $\tilde \MMM:=f^{-1}_*\MMM$ is the proper transform of $\MMM$ on $\tilde X$, $E_i$ are prime exceptional divisors, and $e_i$ are rational numbers called discrepancies.
Note that in this formula discrepancies are defined with respect to $ \mu M$, where $M\in \MMM$ is 
a general member \cite[Sect. 4]{Kollar95:pairs}.
We say that the pair $(X,\mu\MMM)$
\textit{canonical}, resp. \textit{terminal} if $e_i\ge 0$ (resp. $e_i>0$) for all $i$ and for all birational morphisms $f:\tilde X\to X$. 
\end{sdefinition}

\begin{slemma}[{\cite{Alexeev:ge}}, {\cite{Kollar95:pairs}}, {\cite{P:G-MMP}}]
\label{lemma:sing}
Let $(X,\MMM)$ be a pair, where $X$ is a threefold with at worst terminal 
singularities and $\MMM$ is a movable linear system on $X$. 
\begin{enumerate}
\item \label{lemma:sing:c}
If $(X,\MMM)$ is canonical, then a general member $S\in \MMM$ has only Du Val 
singularities and in a neighborhood of each point $P\in \Bs(\MMM)$ we have 
$\MMM\sim -K_X$, 
\item \label{lemma:sing:t}
If $(X,\MMM)$ is terminal, then a general member $S\in \MMM$ is a smooth 
surface contained in the smooth locus of $X$,
and $\dim \Bs (\MMM)\le 0$.
\end{enumerate}
\end{slemma}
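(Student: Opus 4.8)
The plan is to compare the singularities of the pair $(X,\MMM)$ with those of a general member $S\in\MMM$ by passing to a common resolution, and then to run two local multiplicity estimates—one along a hypothetical base curve, one at a terminal point—to pin down $\Bs(\MMM)$. First I would choose a birational morphism $f\colon\tilde X\to X$ that simultaneously resolves the singularities of $X$ and makes the mobile part $\tilde\MMM$ of $f^{-1}_*\MMM$ base-point free, and write $K_{\tilde X}=f^{*}K_X+\sum a_iE_i$ together with $f^{*}M=\tilde S+\sum m_iE_i$ for a general member $M$, where $a_i=a(E_i;X)$ is the discrepancy and $m_i=\operatorname{mult}_{E_i}\MMM\ge 0$. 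By Bertini a general $\tilde S\in\tilde\MMM$ is smooth and maps birationally onto the general member $S\in\MMM$, so $g:=f|_{\tilde S}\colon\tilde S\to S$ is a resolution of the (normal, irreducible) surface $S$. Restricting the identity $K_{\tilde X}+\tilde S=f^{*}(K_X+S)+\sum(a_i-m_i)E_i$ to $\tilde S$ and applying adjunction on the smooth threefold $\tilde X$ and on $X$ yields the key formula
\[
K_{\tilde S}=g^{*}K_S+\sum_i (a_i-m_i)\,E_i|_{\tilde S}.
\]
Thus the discrepancies of the surface $S$ coincide with the discrepancies $a_i-m_i$ of the pair $(X,\MMM)$ along the divisors meeting $\tilde S$. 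In particular, if $(X,\MMM)$ is canonical then $S$ has canonical, i.e.\ Du Val, singularities, and if $(X,\MMM)$ is terminal then $S$ has terminal, i.e.\ smooth, surface singularities. This already gives the smoothness of $S$ in \xref{lemma:sing:t} and the Du Val assertion in \xref{lemma:sing:c}.

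To bound the base locus in the terminal case, suppose $\dim\Bs(\MMM)=1$ and choose a curve $C\subseteq\Bs(\MMM)$; since $\Sing(X)$ is finite, a general point of $C$ lies in the smooth locus. Blowing up $C$ there, the exceptional divisor $E$ has $a(E;X)=1$, while $\operatorname{mult}_E\MMM\ge 1$ because every member contains $C$, so $a_E-m_E\le 0$, contradicting terminality; hence $\dim\Bs(\MMM)\le 0$. By the same mechanism no terminal point $P$ of index $r$ can lie in $\Bs(\MMM)$: the divisor $E$ over $P$ of minimal discrepancy has $a(E;X)=1/r$, whereas $rM$ being Cartier forces $\operatorname{mult}_E\MMM\in\tfrac1r\ZZ_{>0}$, so $m_E\ge 1/r=a_E$, again violating terminality. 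Since a general $S$ meets the finite set $\Sing(X)$ only along $\Bs(\MMM)$, it then avoids $\Sing(X)$ entirely, giving $S\subset X\setminus\Sing(X)$ and completing \xref{lemma:sing:t}.

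For the remaining claim in \xref{lemma:sing:c} the same local estimate, now under the weaker inequality $a_E\ge m_E$, controls the class of $\MMM$ at a base point $P$. At a smooth base point the local class group is trivial and there is nothing to prove; at a terminal point of index $r$ the inequality $a_E\ge m_E$ together with $m_E\in\tfrac1r\ZZ_{>0}$ forces $m_E=1/r$, so the general member attains the minimal possible multiplicity along the minimal divisor $E$. \emph{The hard part} is to convert this numerical coincidence into $\MMM\sim -K_X$ near $P$: here I would invoke the explicit classification of terminal threefold points, under which $P$ is analytically a cyclic (hyper)quotient $\tfrac1r(1,-1,b)$, and compute that the assignment sending a class in $\Cl(X,P)\cong\ZZ/r$ to its multiplicity along $E$ is injective near the generator, with the value $1/r$ realized precisely by the class of $-K_X$. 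This identifies the class of $M$ with that of $-K_X$ in $\Cl(X,P)$, i.e.\ $\MMM\sim -K_X$ locally. I expect this last local class-group computation to be the principal obstacle, alongside the routine but necessary verification that the general member $S$ is irreducible and normal so that the adjunction formula above is legitimate.
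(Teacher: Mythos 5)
The paper offers no proof of this lemma at all---it is quoted with references to \cite{Alexeev:ge}, \cite{Kollar95:pairs}, \cite{P:G-MMP}---so your attempt can only be measured against the standard arguments in those sources. Your overall strategy (adjunction of the pair to a general member on a log resolution, plus local comparison of discrepancy against multiplicity at base points) is indeed the standard one, and the base-curve exclusion in \xref{lemma:sing:t} is fine. But there are two genuine gaps.

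First, the step ``$rM$ being Cartier forces $\operatorname{mult}_E\MMM\in\tfrac1r\ZZ_{>0}$'' is not automatic and is exactly where the argument can fail. On a terminal threefold a Weil divisor through a point of index $r$ need not satisfy ``$rM$ Cartier'': an ordinary double point $xy=zw$ has index $1$, yet the plane $\{x=z=0\}$ has multiplicity $\tfrac12$ along the exceptional divisor of the blowup. If such members were allowed, the conclusion that $S$ avoids $\Sing(X)$ would actually be \emph{false}: the pencil of planes through the vertex of the quadric cone $\{x_0x_1=x_2x_3\}\subset\PP^4$ is a movable system with terminal pair all of whose members pass through the node. What rescues the statement is that the paper's definition requires $K_X+\mu\MMM$ to be $\QQ$-Cartier, hence $M$ is $\QQ$-Cartier, and then Kawamata's lemma (for a $\QQ$-Cartier Weil divisor $D$ on a terminal threefold, $rD$ is Cartier at every point of index $r$) gives the integrality you need. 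Your proof never uses the $\QQ$-Cartier hypothesis, so this must be added. You also implicitly use that the minimal discrepancy $1/r$ (resp.\ $1$ at a singular Gorenstein point) is attained by a divisor with center exactly $P$; this is Kawamata's and Markushevich's theorems and should be invoked.

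Second, the final step of \xref{lemma:sing:c} rests on a false classification claim: terminal threefold points of index $r>1$ are \emph{not} all cyclic quotients $\tfrac1r(1,-1,b)$; by Mori--Reid they are hyperquotients of types $cA/r$, $cAx/2$, $cAx/4$, $cD/2$, $cD/3$, $cE/2$, whose local analytic class groups are in general larger than $\ZZ/r$ and are not controlled by the single computation you propose. Your argument establishes $\MMM\sim-K_X$ only at cyclic quotient points; the general case (the real content of the cited lemma) needs the index-one cover and the weighted structure of the hyperquotient, and remains unproved in your sketch---you correctly flag it as the hard part, but the route you indicate does not reach it. A smaller but real issue in the same part: when $\Bs(\MMM)$ contains a curve, the divisors $E_i$ lying over it meet $\tilde S$ in components that are \emph{not} $g$-exceptional, so the identity $K_{\tilde S}=g^{*}K_S+\sum(a_i-m_i)E_i|_{\tilde S}$ cannot be read directly as a discrepancy computation for $S$ without first sorting the horizontal contributions into the comparison of $(K_X+S)|_S$ with $K_S$.
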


\begin{stheorem}[{\cite{Alexeev:ge}}, {\cite{P:G-MMP}}]
Let $X$ be a variety with terminal $\QQ$-factorial singularities and let 
$\MMM$ be a linear system on $X$ without fixed components 
such that the pair $(X,\mu \MMM)$ is canonical for some $\mu\ge 0$.
Let 
\[
\varphi: (X,\mu \MMM)\dashrightarrow (X',\mu \MMM')
\]
be a step of 
$K_X+\mu \MMM$-MMP, that is, $\varphi$ is either a $K_X+\mu \MMM$-negative extremal divisorial contraction or a $K_X+\mu \MMM$-flip.
Then the variety $X'$ again has terminal $\QQ$-factorial singularities
and the pair $(X',\mu \MMM')$ is canonical.
If moreover $(X,\mu \MMM)$ is terminal, then so is $(X',\mu \MMM')$.
\end{stheorem}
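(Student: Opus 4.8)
The plan is to recast both conclusions as statements about discrepancies and then invoke the negativity lemma for steps of the MMP. Recall that for a $(K_X+\mu\MMM)$-negative divisorial contraction or flip $\varphi$ one has, for every divisor $F$ over $X$,
\[
a(F,X,\mu\MMM)\le a(F,X',\mu\MMM'),
\]
with strict inequality as soon as $\operatorname{Center}_X(F)$ is contained in the locus contracted by $\varphi$, namely the exceptional divisor $E$ in the divisorial case or the flipping curves in the flip case (see \cite{Kollar95:pairs}, \cite{P:G-MMP}). That $X'$ is again $\QQ$-factorial is standard for a divisorial contraction of an extremal ray, respectively a flip, of a $\QQ$-factorial variety. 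Hence everything reduces to controlling discrepancies on $X'$.

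First I would treat the pair $(X',\mu\MMM')$. A prime divisor $F$ exceptional over $X'$ is either exceptional over $X$ as well, or -- only in the divisorial case -- equals the contracted divisor $E$. In the former case the displayed inequality together with the canonicity of $(X,\mu\MMM)$ gives $a(F,X',\mu\MMM')\ge a(F,X,\mu\MMM)\ge 0$, which is strict when $(X,\mu\MMM)$ is terminal. For $F=E$ I would use that $\MMM$ has no fixed component, so $E$ occurs with coefficient $0$ in $\mu\MMM$ and $a(E,X,\mu\MMM)=0$; since $\operatorname{Center}_X(E)=E$ lies in the contracted locus, strict monotonicity forces $a(E,X',\mu\MMM')>0$. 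Thus $(X',\mu\MMM')$ is canonical, and terminal whenever $(X,\mu\MMM)$ is.

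The delicate point, which I expect to be the real obstacle, is the terminality of the variety $X'$ itself: since $\varphi$ is only $(K_X+\mu\MMM)$-negative and need not be $K_X$-negative, this does not follow from the classical $K_X$-MMP, and one cannot compare $a(F,X)$ with $a(F,X')$ directly. I would argue by contradiction. For the contracted divisor one already has $a(E,X')\ge a(E,X',\mu\MMM')>0$ by the previous paragraph, so it remains to rule out $a(F,X')=0$ for $F$ exceptional over $X$. Writing $a(F,X')=a(F,X',\mu\MMM')+\mu\cdot\operatorname{mult}_F(\MMM')$ with both summands nonnegative (the first by the canonicity just proved), one gets $a(F,X',\mu\MMM')=0$, whence the negativity inequality yields $a(F,X,\mu\MMM)\le 0$ and canonicity of $(X,\mu\MMM)$ upgrades this to $a(F,X,\mu\MMM)=0$. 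Were $\operatorname{Center}_X(F)$ contained in the contracted locus, strict monotonicity would give $0=a(F,X,\mu\MMM)<a(F,X',\mu\MMM')=0$, which is absurd; therefore $\operatorname{Center}_X(F)$ avoids that locus, so $\varphi$ is an isomorphism near its generic point and $a(F,X')=a(F,X)$. But $X$ is terminal, so $a(F,X)>0$, contradicting $a(F,X')=0$. This shows $X'$ is terminal and completes the argument.
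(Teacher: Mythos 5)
The paper states this theorem without giving any proof, treating it as a standard fact from the references attached to the surrounding corollaries (Alexeev, Koll\'ar, and the author's $G$-MMP paper), so there is no in-paper argument to compare against; your write-up is a correct, self-contained version of exactly that standard argument. You use the right tools in the right places: $\QQ$-factoriality of $X'$ is the usual extremal-ray statement, monotonicity of discrepancies $a(F,X,\mu\MMM)\le a(F,X',\mu\MMM')$ with strictness over the contracted locus gives canonicity (resp.\ terminality) of the pair $(X',\mu\MMM')$, including the case $F=E$ via the absence of fixed components, and you correctly single out the one non-formal point --- terminality of the variety $X'$ itself, which does not follow from the classical MMP since the step need not be $K_X$-negative --- resolving it through $a(F,X')=a(F,X',\mu\MMM')+\mu\operatorname{mult}_F(\MMM')\ge a(F,X',\mu\MMM')$ combined with the strictness dichotomy and terminality of $X$. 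I see no gaps.
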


\begin{scorollary}[\cite{Alexeev:ge}, \cite{BCHM}, \cite{P:G-MMP}]
\label{cor:terminal-model}
Let $X$ be a normal variety and let 
$\MMM$ be a linear system on $X$ without fixed components 
such that $K_X+\mu\MMM$ is $\QQ$-Cartier for some $\mu\ge 0$.
Then there exists a projective birational morphism 
\[
f:(\tilde X,\tilde \MMM)\longrightarrow (X,\MMM),
\]
where $\tilde \MMM:=f^{-1}_*\MMM$, such that the pair 
$(\tilde X,\mu\tilde \MMM)$ is terminal, $K_{\tilde X}+\mu\tilde \MMM$ 
is $f$-nef, and $\tilde X$ is $\QQ$-factorial. Moreover, 
\[
K_{\tilde X}+\mu\tilde \MMM\qq f^*(K_X+\mu\MMM)-\sum e_iE_i,
\]
where $E_i$ are prime exceptional divisors and $e_i\ge 0$ for all $i$.
\end{scorollary}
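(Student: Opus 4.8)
The plan is to construct $\tilde X$ by first resolving the pair and then running a relative minimal model program over $X$, using BCHM to guarantee termination and the preceding theorem to carry terminality through every step.

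First I would choose a log resolution $g\colon W\to X$ of $(X,\MMM)$, so that $W$ is smooth, the proper transform $\MMM_W:=g^{-1}_*\MMM$ is base-point-free, and $\operatorname{Exc}(g)$ together with a general member of $\MMM_W$ forms a simple normal crossing divisor. Writing
\[
K_W+\mu\MMM_W\qq g^*(K_X+\mu\MMM)+\sum_i a_iE_i,
\]
where the $E_i$ range over the $g$-exceptional prime divisors, I would observe that $(W,\mu\MMM_W)$ is terminal: since $\MMM_W$ is free, a general member passes through no fixed centre, so its order of vanishing along any divisor $F$ exceptional over $W$ is $0$, and hence the discrepancy of the pair along such $F$ equals the strictly positive discrepancy of the smooth variety $W$.

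Next I would run the $(K_W+\mu\MMM_W)$-MMP over $X$, tracking the proper transform $\tilde\MMM$ of $\MMM$ at each step. Relatively over $X$ one has $K_W+\mu\MMM_W\equiv_X\sum_i a_iE_i$, a divisor supported on the $g$-exceptional locus; thus every extremal contraction or flip affects only that locus and, because $\MMM$ has no fixed components, never contracts a component of the movable system. By the theorem stated just above, each step preserves $\QQ$-factoriality and keeps the pair terminal. The essential input is BCHM, which ensures this relative program terminates in a model
\[
f\colon(\tilde X,\tilde\MMM)\longrightarrow(X,\MMM)
\]
on which $K_{\tilde X}+\mu\tilde\MMM$ is $f$-nef, with $\tilde X$ being $\QQ$-factorial and $(\tilde X,\mu\tilde\MMM)$ terminal. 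Pushing the resolution identity through the program, the surviving exceptional divisors yield $K_{\tilde X}+\mu\tilde\MMM\qq f^*(K_X+\mu\MMM)-\sum_i e_iE_i$; since the difference $D:=(K_{\tilde X}+\mu\tilde\MMM)-f^*(K_X+\mu\MMM)$ is $f$-exceptional and $f$-nef, the negativity lemma forces $D\le 0$, i.e. $e_i\ge 0$ for all $i$, as required.

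I expect the main obstacle to be the termination and applicability of the relative program: one must confirm that BCHM and the cited MMP results genuinely apply to the movable pair $(W,\mu\MMM_W)$ over $X$ — in particular that treating $\tilde\MMM$ as a movable boundary is compatible with the klt/terminal hypotheses of those theorems — and that the program can be arranged so that precisely the divisors of non-positive discrepancy survive. Once termination is granted, the remaining points (terminality on $W$, its preservation by the preceding theorem, and the negativity-lemma sign computation) are routine.
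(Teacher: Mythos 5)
Your argument is correct and is precisely the standard derivation the paper has in mind: the paper gives no written proof, only citations to \cite{Alexeev:ge}, \cite{BCHM}, \cite{P:G-MMP}, and presents the statement as a corollary of the preceding theorem on MMP steps preserving terminality of movable pairs, which is exactly how you use it (log resolution, relative $(K_W+\mu\MMM_W)$-MMP over $X$ terminating by BCHM, negativity lemma for the signs $e_i\ge 0$). The one point you rightly flag --- replacing the movable boundary $\mu\tilde\MMM$ by a klt $\QQ$-divisor boundary to invoke BCHM --- is handled in the cited references and does not affect the correctness of your outline.
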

Such a pair $(\tilde X,\mu\tilde \MMM)$ is called \textit{ terminal $\QQ$-factorial 
model} of $(X,\mu\MMM)$.

\begin{scorollary}[\cite{Corti95:Sark}, \cite{Alexeev:ge}, \cite{P:G-MMP}]
\label{cor:can-blowup}
Let $X$ be a variety with terminal $\QQ$-factorial singularities and let 
$\MMM$ be a linear system on $X$ without fixed components 
such that the pair $(X, \mu\MMM)$ is canonical for some $\mu\ge 0$.
Then there exists a projective birational morphism 
\[
f:(\tilde X,\tilde \MMM)\longrightarrow (X,\MMM),
\]
where $\tilde \MMM:=f^{-1}_*\MMM$, such that $\tilde X$ is terminal $\QQ$-factorial, the pair 
$(\tilde X,\mu\tilde \MMM)$ is canonical, 
\[
K_{\tilde X}+\mu\tilde \MMM=f^*(K_X+\mu \MMM),
\]
and the exceptional locus of $f$ is a prime divisor. 
\end{scorollary}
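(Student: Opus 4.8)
The plan is to prove Corollary~\ref{cor:can-blowup} by invoking the terminal $\QQ$-factorial model produced in Corollary~\ref{cor:terminal-model} and then repackaging it so that the discrepancy relation becomes an \emph{equality} $K_{\tilde X}+\mu\tilde\MMM=f^*(K_X+\mu\MMM)$ with a \emph{single} exceptional prime divisor. The extra input compared with Corollary~\ref{cor:terminal-model} is the hypothesis that $(X,\mu\MMM)$ is already canonical and that $X$ itself is terminal $\QQ$-factorial; the payoff is the sharper conclusion of a \emph{crepant} blow-up whose exceptional set is irreducible. So the proof is really a bookkeeping of discrepancies combined with a standard maximal-crepant-extraction argument.

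First I would apply Corollary~\ref{cor:terminal-model} to get $g:(\bar X,\mu\bar\MMM)\to(X,\MMM)$ with $(\bar X,\mu\bar\MMM)$ terminal $\QQ$-factorial and $K_{\bar X}+\mu\bar\MMM\qq g^*(K_X+\mu\MMM)-\sum e_iE_i$ with all $e_i\ge 0$. Since $(X,\mu\MMM)$ is canonical, every crepant divisor over $X$ has $e_i=0$; so the divisors $E_i$ with $e_i>0$ can be contracted back. Concretely, I would run a $K_{\bar X}+\mu\bar\MMM$-MMP over $X$: by the Theorem above, each step preserves terminal $\QQ$-factoriality of the total space and canonicity of the pair, and the relative MMP terminates at a model on which $K+\mu\MMM$ is relatively nef. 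Because the pair downstairs is canonical, this relative minimal model is forced to be crepant over $X$; that is, all strictly positive discrepancies are eliminated and we land on a $\QQ$-factorial terminal model $\tilde X\to X$ with $K_{\tilde X}+\mu\tilde\MMM=f^*(K_X+\mu\MMM)$, i.e.\ all surviving exceptional divisors are crepant.

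The remaining point, and the step I expect to be the main obstacle, is to arrange that the exceptional locus is a \emph{single} prime divisor rather than a union of several crepant divisors. Here I would use that $X$ is $\QQ$-factorial of Picard number controlled by the relative setting: a $\QQ$-factorial crepant model over a $\QQ$-factorial base that extracts only crepant divisors can be chosen to extract precisely one divisor at a time, and one shows that a \emph{single} extraction already suffices by comparing relative Picard numbers. More precisely, among all crepant divisors over $X$ one selects a minimal model for the MMP that extracts exactly those divisors whose centers meet $\Bs(\MMM)$; the $\QQ$-factoriality forces the relative Picard number of $\tilde X/X$ to equal the number of extracted divisors, and a length-one relative MMP (a single divisorial extraction, the inverse of a divisorial contraction) realizes $\uprho(\tilde X/X)=1$. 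This is exactly the content of the cited references \cite{Corti95:Sark}, \cite{Alexeev:ge}, \cite{P:G-MMP}, where such maximal crepant divisorial extractions are constructed; the delicate part is verifying that one can always reduce to a single divisor, which relies on the fact that a terminal canonical pair admits a crepant divisorial extraction over any chosen valuation of discrepancy zero.

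Finally I would check the two displayed conclusions hold on the resulting $\tilde X$: the equality $K_{\tilde X}+\mu\tilde\MMM=f^*(K_X+\mu\MMM)$ is immediate from crepancy, canonicity of $(\tilde X,\mu\tilde\MMM)$ is inherited through each MMP step by the Theorem above, terminal $\QQ$-factoriality of $\tilde X$ likewise, and irreducibility of the exceptional locus is the output of the single-extraction construction. The whole argument is thus a refinement of Corollary~\ref{cor:terminal-model}: take the terminal model, run a relative MMP to kill the strictly positive discrepancies, and invoke the standard extraction machinery to collapse the exceptional set to one prime divisor.
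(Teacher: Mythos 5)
The paper does not prove this corollary; it is quoted from \cite{Corti95:Sark}, \cite{Alexeev:ge} and \cite{P:G-MMP}, so your attempt can only be measured against the standard argument in those references. Your overall strategy --- pass to the terminal $\QQ$-factorial model of Corollary~\ref{cor:terminal-model}, use canonicity of $(X,\mu\MMM)$ to see that this model is crepant over $X$, then contract down to a single exceptional prime divisor --- is the right one, but as written it contains a bookkeeping error and a genuine gap. The bookkeeping: in Corollary~\ref{cor:terminal-model} one has $K_{\bar X}+\mu\bar\MMM\qq g^*(K_X+\mu\MMM)-\sum e_iE_i$ with $e_i\ge 0$, while canonicity of $(X,\mu\MMM)$ gives $e_i=-a(E_i;X,\mu\MMM)\le 0$, so \emph{every} $e_i$ vanishes and \emph{every} extracted divisor is crepant. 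There are no ``divisors with $e_i>0$'' to contract back, and the relative $(K_{\bar X}+\mu\bar\MMM)$-MMP over $X$ that you propose is vacuous, since that divisor is already numerically trivial over $X$. (Note also that the statement tacitly assumes $(X,\mu\MMM)$ is canonical but \emph{not} terminal; otherwise no crepant divisor exists and no nontrivial crepant $f$ is possible.)

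The gap is precisely the step you flag as the main obstacle: reducing to one prime divisor. Saying that ``a length-one relative MMP realizes $\uprho(\tilde X/X)=1$'' restates the conclusion rather than proving it, and the condition about centers meeting $\Bs(\MMM)$ plays no role. The missing argument is an auxiliary MMP: if $g:\bar X\to X$ extracts the crepant divisors $E_1,\dots,E_k$ with $k\ge 2$, fix $E_1$ and run the MMP over $X$ for $K_{\bar X}+\mu\bar\MMM+\varepsilon\sum_{i\ge 2}E_i$ with $0<\varepsilon\ll 1$; this pair is still canonical and is $\qq\varepsilon\sum_{i\ge 2}E_i$ over $X$. Since this is an MMP for an effective exceptional divisor, any divisor it contracts must lie in $\Supp\bigl(\sum_{i\ge 2}E_i\bigr)$, so $E_1$ survives; and by the negativity lemma the end product, on which $\varepsilon\sum_{i\ge 2}E_i$ is nef over $X$, can carry none of $E_2,\dots,E_k$. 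The result is a $\QQ$-factorial crepant model extracting exactly $E_1$, whence $\uprho(\tilde X/X)=1$ and the exceptional locus is the prime divisor $E_1$. With that auxiliary MMP supplied, your outline becomes the standard proof.
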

Such a morphism is called an \textit{extremal log crepant blowup} of $(X, \mu\MMM)$.
\begin{scorollary}[\cite{BCHM}]
\label{cor:MMP}
Let $X$ be a normal projective variety and let 
$\MMM$ be a linear system on $X$ without fixed components 
such that $K_X+\mu\MMM$ is canonical for some $\mu\ge 0$.
Assume that $K_X+\mu\MMM$ is not pseudo-effective.
Then one can run $K_X+\mu\MMM$-MMP and end with a Mori fiber space.
\end{scorollary}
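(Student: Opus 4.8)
The plan is to deduce the statement directly from the main theorem of \cite{BCHM}, once the pair has been put into the form to which that theorem applies, namely a $\QQ$-factorial klt pair whose log canonical class is not pseudo-effective. The only real work is the passage from the canonical movable pair $(X,\mu\MMM)$ to such a klt pair without changing the relevant $\QQ$-linear equivalence class or the resulting sequence of extremal contractions and flips.

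First I would record that $X$ itself has canonical, hence klt, singularities: since a general member of $\MMM$ is effective, the discrepancy of every exceptional divisor over $X$ can only increase when the boundary $\mu\MMM$ is dropped, so $(X,0)$ is canonical as soon as $(X,\mu\MMM)$ is. Next I would replace the movable boundary by a genuine effective $\QQ$-divisor with small coefficients. Fixing $m\gg 0$ and choosing general members $D_1,\dots,D_m\in\MMM$, set $\Delta:=\tfrac{\mu}{m}\sum_{i=1}^m D_i$. Then $K_X+\Delta\qq K_X+\mu\MMM$, so $K_X+\Delta$ is again not pseudo-effective, and $(X,\Delta)$ is klt. The last point is the heart of the matter and is exactly where movability of $\MMM$ enters: on a common log resolution $f\colon Y\to X$ of $X$ and $\MMM$ the free part $f^{-1}_*\MMM$ is base-point-free, so general $f^{-1}_*D_i$ are smooth and meet the exceptional locus and each other transversally; the coefficient of each exceptional $E_j$ in $f^*(K_X+\Delta)-K_Y$ is literally the same number as the one attached to $(X,\mu\MMM)$, hence $\le 0$ by canonicity, while the coefficients along the $f^{-1}_*D_i$ are $\mu/m<1$. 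Thus all log discrepancies on $Y$ exceed $-1$ and $(X,\Delta)$ is klt.

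Finally, after a small $\QQ$-factorialization (which exists by \cite{BCHM}; alternatively one may pass to the terminal $\QQ$-factorial model of \xref{cor:terminal-model} and note that subtracting the effective exceptional divisors keeps the class non-pseudo-effective), I would invoke \cite{BCHM}: for a $\QQ$-factorial klt pair with $K_X+\Delta$ not pseudo-effective, the $(K_X+\Delta)$-MMP with scaling of an ample divisor terminates with a Mori fiber space. Because $K_X+\Delta\qq K_X+\mu\MMM$, an extremal ray is $(K_X+\Delta)$-negative if and only if it is $(K_X+\mu\MMM)$-negative, and the proper transform of $\Delta$ under each step is again $\tfrac{\mu}{m}\sum_{i=1}^m$ general members of the proper transform of $\MMM$; hence this MMP is a $(K_X+\mu\MMM)$-MMP, and it ends with the desired Mori fiber space. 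The only subtle point is the klt verification above, together with checking that non-pseudo-effectivity survives the auxiliary $\QQ$-factorialization; both are routine consequences of the movability of $\MMM$.
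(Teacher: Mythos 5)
Your proposal is correct and follows essentially the same route as the paper, which states this corollary without proof as a direct consequence of \cite{BCHM}; your reduction (replace $\mu\MMM$ by $\frac{\mu}{m}\sum D_i$ for general members, verify klt-ness on a log resolution where the mobile part is free, pass to a $\QQ$-factorialization, and invoke the BCHM termination with scaling for non-pseudo-effective klt log canonical classes) is the standard argument implicit in that citation. The only cosmetic remark is that your opening claim that $(X,0)$ is canonical presupposes $K_X$ itself is $\QQ$-Cartier, which is neither guaranteed nor needed, since the klt-ness of $(X,\Delta)$ is what you actually verify and use.
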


Recall also the following well-known result.

\begin{stheorem}[{\cite{Kawamata:Div-contr}}]
\label{theorem-Kawamata-blowup}
Let $(X \ni P)$ be a terminal quotient singularity of type
$\frac1r (1, a, r-a)$, let $ f \colon \tilde X \to X $ be a divisorial Mori contraction,
and let $E$ be the exceptional divisor.
Then $f(E)=P$, the contraction $f$ is a weighted blowup with weights $(1, a, r-a)$,
and the discrepancy of $E$ equals $a (E, X)=1/r$.
\end{stheorem}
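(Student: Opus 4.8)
The plan is to reduce the statement to a discrepancy computation on the index‑one cover and then to pin the extraction down by its discrepancy. Write $a=a(E,X)$, so that $K_{\tilde X}=f^*K_X+aE$. Because $f$ is a Mori contraction, $-E$ is $f$-ample, hence $K_{\tilde X}\cdot\ell=aE\cdot\ell<0$ for a curve $\ell$ contracted by $f$, so $-K_{\tilde X}$ is $f$-ample and $a>0$ by terminality of $X$. Since $rK_X$ is Cartier we have $a\in\frac1r\ZZ$, so $a\ge\frac1r$. I would first dispose of the possibility that $\dim f(E)=1$: as $X$ is smooth away from $P$, over $X\setminus\{P\}$ such an $f$ would be the blow‑up of a smooth curve, of integral discrepancy $1$; by the local classification of divisorial contractions to curves (which constrains the target along the image), this cannot be completed through the non‑Gorenstein point $P$ keeping $\tilde X$ terminal and $\uprho(\tilde X/X)=1$. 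This leaves $f(E)=P$.

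Next I would pass to the canonical index‑one cover $\pi\colon(\mathbb{A}^{3},o)\to(X,P)$, a cyclic Galois cover with group $\mumu_r$ acting by $\frac1r(1,a,r-a)$ (here $\gcd(a,r)=1$) and étale in codimension one. Pulling back $f$ and normalizing yields a $\mumu_r$‑equivariant divisorial contraction $\tilde f\colon(\tilde Y,\tilde E)\to(\mathbb{A}^{3},o)$ over a smooth point, where $\tilde E$ is the invariant divisor over $E$ with ramification index $e\mid r$. The discrepancies are linked by
\[
a(\tilde E,\mathbb{A}^{3})+1=e\,\bigl(a(E,X)+1\bigr).
\]
Since any divisorial valuation centred at a smooth threefold point has discrepancy at least $2$, this merely reconfirms $a\ge\frac1r$; the real task is the reverse inequality $a\le\frac1r$.

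The crux is this upper bound, equivalently showing that the equivariant extraction upstairs realizes the \emph{smallest} discrepancy. Here I would use that $\tilde f$ is extremal ($\uprho(\tilde Y/\mathbb{A}^{3})=1$) and that $\tilde Y$ is terminal: among the $\mumu_r$‑invariant divisorial extractions of $o\in\mathbb{A}^{3}$, the one that descends to an extremal terminal Mori contraction over $X$ must be the weighted blow‑up with weights $(1,a,r-a)$, which has $a(\tilde E,\mathbb{A}^{3})=1+a+(r-a)-1=r$. Ruling out every invariant extraction of strictly larger discrepancy is exactly the hard part, and cannot be bypassed by discrepancy bookkeeping alone; it requires the fine analysis of equivariant weighted blow‑ups together with length and intersection estimates on the fibres of $\tilde f$. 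Once $a(\tilde E,\mathbb{A}^{3})=r$ is established, the displayed formula, combined with $\gcd(r,r+1)=1$ and $e\mid r$, forces $e=r$ and hence $a(E,X)=\frac1r$.

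Finally I would identify $f$ with the weighted blow‑up. Torically, in $N=\ZZ^{3}+\ZZ\cdot\frac1r(1,a,r-a)$ with $\sigma=\mathbb{R}_{\ge0}^{3}$, a divisorial valuation with centre $P$ corresponds to a primitive interior lattice vector $v$, whose discrepancy equals the sum of the coordinates of $v$ minus $1$. A short congruence argument shows that the unique interior lattice point of $N$ with coordinate sum $1+\frac1r$ is $\frac1r(1,a,r-a)$; thus the minimal‑discrepancy valuation over $(X,P)$ is unique and is precisely the weighted blow‑up with weights $(1,a,r-a)$. Combined with $a(E,X)=\frac1r$ from the previous step, this forces $E$ to be that valuation, and since $f$ and the weighted blow‑up are both extremal extractions of the same divisor over $X$, they coincide. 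The three assertions $f(E)=P$, the description of $f$ as the weighted blow‑up, and $a(E,X)=\frac1r$ follow.
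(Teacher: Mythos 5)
The paper does not actually prove this statement: it is imported verbatim as Kawamata's theorem, with only the citation \cite{Kawamata:Div-contr}, so there is no internal argument to compare yours with. Judged on its own terms, your write-up is an organized reduction rather than a proof, because the two genuinely nontrivial assertions are precisely the ones you defer. First, the exclusion of $\dim f(E)=1$ rests on an appeal to an unspecified ``local classification of divisorial contractions to curves'' through the non-Gorenstein point; no such classification is elementary, and ruling out a curve image through $P$ is part of the content of Kawamata's theorem, not an available input. Second, and decisively, you write that establishing $a(E,X)\le \frac1r$ --- equivalently, that the extremal extraction realizes the minimal discrepancy --- ``is exactly the hard part, and cannot be bypassed by discrepancy bookkeeping alone.'' That is the entire theorem: the remaining ingredients you supply (positivity and $r$-integrality of the discrepancy, the toric computation exhibiting $\frac1r(1,a,r-a)$ as the unique valuation over $P$ with discrepancy $\frac1r$, and uniqueness of an extremal extraction of a fixed divisorial valuation) are standard. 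A sketch that isolates the crux and then declines to prove it has a genuine gap, and the proposed route --- classifying equivariant extractions from the smooth point on the index-one cover --- is not obviously easier than working directly on $X$, where one can play $E$ off against the $r-1$ toric valuations of discrepancy $\frac kr<1$ using terminality of $\tilde X$ and the $f$-negativity of $K_{\tilde X}$.

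Two smaller inaccuracies. The covering relation $a(\tilde E,\mathbb{A}^3)+1=e\bigl(a(E,X)+1\bigr)$ together with $a(\tilde E,\mathbb{A}^3)\ge 2$ yields only $a(E,X)\ge \frac3e-1$, which for $r\ge 3$ is strictly weaker than the bound $a(E,X)\ge\frac1r$ coming from $rK_X$ being Cartier, so it does not ``reconfirm'' that bound. And once you know $a(\tilde E,\mathbb{A}^3)=r$, the conclusion $e=r$ does not follow from ``$\gcd(r,r+1)=1$ and $e\mid r$'': the equation $e(a+1)=r+1$ is arithmetically consistent with proper divisors $e$ of $r$ (e.g.\ $r=4$, $e=2$, $a=\frac32$). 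You need to compute the ramification index of the weighted-blowup valuation directly; since $v(x)=1$ generates $\ZZ$ upstairs while every invariant monomial has $v$-value divisible by $r$, one gets $e=r$, and only then $a(E,X)=\frac1r$.
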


\subsection{Del Pezzo varieties}

\begin{sproposition}[\cite{Fujita:book}, \cite{Shin1989}]
Let $X$ be a three-dimensional del Pezzo variety with terminal singularities.
Then 
\[
\dim \left|-\textstyle\frac12 K_X\right|=\dd(X)+1
\]
and a general member $M\in \left|-\textstyle\frac12 K_X\right|$ is 
a smooth del Pezzo surface of degree $\dd(X)$. Moreover, the pair 
$\left(X, \left|-\textstyle\frac12 K_X\right|\right)$ is terminal.
\end{sproposition}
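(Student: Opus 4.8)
\section*{Proof proposal}

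The plan is to exploit that $A:=-\tfrac12K_X$ is an ample Cartier divisor with $-K_X=2A$, so that $A=K_X+3A$ with $3A$ ample, and that the terminal Gorenstein singularities of $X$ are isolated $cDV$ points with $X$ having rational singularities. First I would record the vanishing coming from Kawamata--Viehweg applied to $A=K_X+3A$ and to $\OOO_X=\OOO_X(K_X+2A)$: this gives $H^i(X,A)=0$ and $H^i(\OOO_X)=0$ for $i>0$, whence $\chi(\OOO_X)=1$, $h^0(X,A)=\chi(X,A)$, and in particular $H^1(\OOO_X)=0$, which I will reuse below. These are the only cohomological inputs needed.

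The heart of the argument is the analysis of $|A|$ and its base locus. By Fujita's theory \cite{Fujita:book} the system $|A|$ is base-point-free whenever $\dd(X)\ge 2$; since $\Sing X$ is finite and $\dim|A|\ge 1$, a general member $S\in|A|$ then avoids $\Sing X$ and is smooth by Bertini. In the remaining case $\dd(X)=1$ one checks directly (on the weighted hypersurface model of the double Veronese cone) that $\Bs|A|$ is a single point lying in the smooth locus of $X$ and that a general $S$ is smooth there. In all cases a general $S\in|A|$ is a smooth surface contained in $X_{\mathrm{sm}}$, and by adjunction $K_S=(K_X+A)|_S=-A|_S$, so $S$ is a smooth del Pezzo surface with $-K_S=A|_S$ of degree $(A|_S)^2=A^3=\dd(X)$. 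Smoothness of $S$ together with finiteness of $\Bs|A|$ now yields terminality of the pair $(X,|A|)$: away from $\Bs|A|$ a general member has multiplicity $0$ against any divisorial valuation, while over a base point $P\in X_{\mathrm{sm}}$ the blow-up $E$ satisfies $a(E,X)=2>1=\operatorname{mult}_E|A|$, and an easy induction on blow-ups of the (finitely many, smooth) base points shows all discrepancies stay positive. This is exactly the converse reading of Lemma~\ref{lemma:sing}\,\ref{lemma:sing:t}, which conversely reconfirms that $S$ is smooth and $\dim\Bs|A|\le 0$.

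For the dimension formula I would use the restriction sequence attached to $S\in|A|$, namely $0\to\OOO_X\to\OOO_X(A)\to\OOO_S(-K_S)\to 0$ (here $\OOO_X(A-S)\cong\OOO_X$). Since $H^1(\OOO_X)=0$, this gives $h^0(X,A)=1+h^0(S,-K_S)$. Applying the standard formula $\dim|-K_S|=K_S^2=\dd(X)$ for a del Pezzo surface of degree $\dd(X)$ yields $h^0(X,A)=1+(\dd(X)+1)$, that is $\dim\bigl|-\tfrac12K_X\bigr|=\dim|A|=\dd(X)+1$, as claimed.

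The \textbf{main obstacle} is the base-locus analysis in the second paragraph, and in particular the case $\dd(X)=1$: there a general member genuinely passes through a distinguished point, so one cannot simply invoke Bertini away from a finite set but must verify both the smoothness of $S$ and the positivity of \emph{all} discrepancies over that point. A secondary technical point is ensuring that the del Pezzo surface section, a priori only Du Val because the points of $\Sing X$ are $cDV$, is in fact smooth, which is precisely what the base-point-freeness (resp.\ the explicit model) is used to guarantee.
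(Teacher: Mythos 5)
The paper does not prove this proposition at all: it is quoted verbatim from the references \cite{Fujita:book} and \cite{Shin1989} (and its content is what the paper's Theorem~\xref{thm:DP:cla} and Lemma~\xref{lemma:sing} are built on), so there is no internal argument to compare yours against. Your reconstruction is the standard one and is essentially correct: Kawamata--Viehweg gives the needed vanishing, Fujita's base-point-freeness (for $\dd\ge 2$) plus the explicit sextic model (for $\dd=1$) gives a smooth member $S\subset X_{\mathrm{sm}}$, adjunction identifies $S$ as a del Pezzo surface of degree $\dd(X)$, and the restriction sequence with $H^1(\OOO_X)=0$ yields $\dim|A|=\dd(X)+1$. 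Two remarks. First, your ``easy induction on blow-ups'' for terminality over the base point is the only thin spot, and it can be closed cleanly in either of two ways: (a) on the model $X_6\subset\PP(1^3,2,3)$ the three degree-one coordinates cut out the \emph{reduced} point $P$, hence form a regular system of parameters at $P$, so a single blow-up makes $\tilde\MMM$ free and $(X,|A|)$ is terminal; or (b) more intrinsically, for a general (smooth) member $S\ni P$ the log smooth pair $(X,S)$ has minimal log discrepancy $2$ at the point $P$, so $a(E,X)\ge \operatorname{mult}_E S+1>\operatorname{mult}_E|A|$ for every divisor $E$ centred at $P$, while divisors centred outside $\Bs|A|$ contribute $\operatorname{mult}_E|A|=0$. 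Second, be aware that the $\dd=1$ case of your argument invokes the weighted-hypersurface model, which is itself part of the Fujita--Shin classification; so what you have written is really a derivation \emph{from} the classification rather than an independent proof --- which is perfectly consistent with the way the paper uses the statement, namely as a quoted result.
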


\begin{stheorem}[\cite{Isk:Fano1e}, \cite{Fujita:book}, \cite{Shin1989}]
\label{thm:DP:cla}
Let $X$ be a three-dimensional del Pezzo variety with terminal singularities.
\begin{enumerate}
\item 
If $\dd(X)=1$ (i.e. $X$ is a double Veronese cone), then 
$X$ can be realized as a hypersurface of degree $6$ in the weighted projective space $\PP(1^3,2,3)$. In this case, the projection 
\[
\pi: X \longrightarrow \PP(1^3,2)
\]
is a double cover whose branch divisor has degree $6$ in $\PP(1^3,2)$. 
\item 
If $\dd(X)=2$ (i.e. $X$ is a quartic double solid), then 
$X$ can be realized as a hypersurface of degree $4$ in the weighted projective space $\PP(1^4,2)$. In this case, the projection 
\[
\pi: X \longrightarrow \PP(1^4)=\PP^3
\]
is a double cover whose branch divisor is a quartic hypersurface. 
\end{enumerate} 
\end{stheorem}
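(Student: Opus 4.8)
The plan is to reconstruct $X$ from its half-anticanonical section ring
\[
R\;:=\;\bigoplus_{m\ge 0} H^0(X,mA),\qquad A:=-\tfrac12 K_X,
\]
where $A$ is the ample Cartier generator with $-K_X=2A$. Since $A$ is ample and Cartier, $\Pro(R)\cong X$, so the whole statement reduces to computing the graded pieces of $R$ and then determining its generators and relations.

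First I would compute the Hilbert function $h^0(mA)$. As $X$ has canonical Gorenstein singularities and $mA-K_X=(m+2)A$ is ample for $m\ge -1$, Kawamata--Viehweg vanishing gives $h^0(mA)=\chi(mA)$ for $m\ge -1$, and Serre duality $h^0(-A)=h^3(K_X+A)=h^3(-A)$ combined with the same vanishing gives $h^0(-A)=0$; in particular $H^i(\OOO_X)=0$ for $i>0$. Let $M\in|A|$ be general; by the preceding proposition $M$ is a smooth del Pezzo surface with $A|_M=-K_M$ and $(-K_M)^2=\dd(X)$. The restriction sequence
\[
0\longrightarrow\OOO_X\bigl((m-1)A\bigr)\longrightarrow\OOO_X(mA)\longrightarrow\OOO_M(-mK_M)\longrightarrow 0
\]
has surjective maps on global sections for $m\ge 0$, because $H^1((m-1)A)=0$, so by induction
\[
h^0(mA)=\sum_{k=0}^m h^0(M,-kK_M)=(m+1)+\frac{\dd(X)\,m(m+1)(m+2)}{6},
\]
using $\chi(M,-kK_M)=\tfrac12 k(k+1)(-K_M)^2+1$ on the surface. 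For $\dd(X)=2$ the values are $1,4,11,\dots$, fitting $\PP(1^4,2)$, and for $\dd(X)=1$ they are $1,3,7,14,\dots$, fitting $\PP(1^3,2,3)$.

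The core step is to exhibit generators and the single relation of $R$. For $\dd(X)=2$ the restriction of $|A|$ to $M$, namely $|-K_M|$ on a degree-$2$ del Pezzo surface, is base point free; as $\Bs|A|\subseteq M$ and $\Bs|A|\cap M=\Bs|-K_M|=\varnothing$, the system $|A|$ itself is base point free and defines a finite morphism $\pi\colon X\to\PP^3$ onto a nondegenerate, hence onto the whole, $\PP^3$. For $\dd(X)=1$ the system $|-K_M|$ has a base point, so $|A|$ is not base point free; here I work with $R$ directly, choosing $x_0,x_1,x_2\in H^0(A)$, a section $y\in H^0(2A)$ outside the image of $\operatorname{Sym}^2 H^0(A)$, and $z\in H^0(3A)$ outside the subring they generate. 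To prove that these generate $R$ with a single relation I would reduce modulo the equation $x_2$ of $M$: the induced graded ring is $\bigoplus_m H^0(M,-mK_M)$, whose classical description as a hypersurface of degree $6$ in $\PP(1,1,2,3)$ (resp.\ of degree $4$ in $\PP(1,1,1,2)$, for $\dd=2$) supplies generation and one relation on $M$, and a graded Nakayama argument in the extra variable $x_2$ lifts both to $X$. A Hilbert-series count then pins down the relation: the coefficient of $t^6$ in $(1-t)^{-3}(1-t^2)^{-1}(1-t^3)^{-1}$ equals $64=h^0(6A)+1$ (resp.\ the coefficient of $t^4$ in $(1-t)^{-4}(1-t^2)^{-1}$ equals $46=h^0(4A)+1$), forcing the kernel of the surjection from the weighted polynomial ring to be principal, generated in degree $6$ (resp.\ $4$).

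Finally, the projection of $\Pro(R)$ forgetting the top generator sends $X$ to $\PP(1^3,2)$ (resp.\ $\PP^3$); since $\pi^*\OOO(1)=A$ and $A^3=\dd(X)$, this map has degree $2$, so the relation is quadratic in $z$ (resp.\ $w$). Completing the square rewrites it as $z^2=f_6(x_0,x_1,x_2,y)$ (resp.\ $w^2=f_4(x_0,\dots,x_3)$), which is exactly the asserted double cover, with branch divisor of degree $6$ (resp.\ the quartic). For $\dd(X)=2$ the branch degree also follows directly from adjunction, $K_X=\pi^*\bigl(K_{\PP^3}+\tfrac12 B\bigr)$ with $K_X=-2\pi^*H$ and $K_{\PP^3}=-4H$, forcing $B\sim 4H$; I would keep this as a cross-check. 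The main obstacle is the case $\dd(X)=1$: there $|A|$ is not base point free, so no morphism can be read off directly, and the real work is the generation-and-single-relation statement for $R$, i.e.\ controlling every multiplication map $H^0(A)\otimes H^0(mA)\to H^0((m+1)A)$ (and the maps producing $y,z$) via the reduction to the del Pezzo surface $M$.
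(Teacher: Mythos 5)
The paper does not prove this statement at all: it is quoted as a classical theorem with references to Iskovskikh, Fujita, and Shin, so there is no in-text argument to compare against. Your reconstruction is the standard graded-ring proof of that classical result and is essentially sound: the vanishing and Hilbert-function computations are correct (the values $1,3,7,14,\dots,63$ and $1,4,11,\dots,45$ do match the weighted hypersurface counts), and the hyperplane-section reduction to the anticanonical ring of the smooth del Pezzo surface $M$, followed by lifting generators and the relation, is exactly how $X\subset\PP(1^3,2,3)$, resp.\ $X\subset\PP(1^4,2)$, is established in the literature. Two points deserve tightening. First, choosing $y$ merely outside the image of $\operatorname{Sym}^2H^0(A)$ is slightly weaker than what the Nakayama lifting needs: you must choose $y$ (and $z$) so that their \emph{restrictions to $M$} are new generators of $\bigoplus_m H^0(M,-mK_M)$, i.e.\ outside $\operatorname{Sym}^2H^0(A)+x_2H^0(A)$ modulo $x_2$; a generic choice works, but say so. Second, the single-degree count $64=h^0(6A)+1$ (resp.\ $46=h^0(4A)+1$) only produces \emph{one} relation in the lowest degree; to get principality of the kernel you should add that the kernel is a prime ideal (as $R$ is a domain), so the degree-$6$ element is irreducible, and then the surjection of four-dimensional domains $S/(f)\to R$ is an isomorphism — or equivalently compare the full Hilbert series, which your closed formula for $h^0(mA)$ already supplies. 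Finally, your degree-$2$ argument for why the relation is quadratic in the top variable is a clean substitute for the terminality argument the paper later invokes (in the proof of Corollary~\ref{lemma:DP}) to see the $y^2$ term; both are valid.
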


\subsection{$\QQ$-Fano varieties}
\label{not:Q-Fano}
Now, let $X$ be a $\QQ$-Fano variety.
It follows from the definition that $\Cl(X)$ is a finitely generated abelian group of rank 1. 
The numbers
\begin{eqnarray*}
\qQ(X)&:=&\max\{ t\in \ZZ \mid -K_X\qq tA,\ \text{$A$ is a Weil divisor}\} 
\\
\qW(X)&:=&\max\{ t\in \ZZ \mid -K_X\sim tA,\ \text{$A$ is a Weil divisor}\} 
\end{eqnarray*}
are called the \textit{Fano index} and \textit{Fano-Weil index}, respectively.
Clearly, $\qW(X)$ divides $\qQ(X)$ and $\qQ(X)=\qW(X)$ if $\Cl(X)$ is torsion free.
For a $\QQ$-Fano variety $X$ throughout this paper 
$A$ will denote a Weil divisor on $X$ such that $-K_X\qq \qQ(X) A$. If $\qQ(X)=\qW(X)$, we take $A$ so that $-K_X\sim \qQ(X) A$.
Since $\uprho(X)=1$ and $X$ is $\QQ$-factorial, for any Weil divisor $M$ on $X$ we can write $-K_X\qq \lambda M$. In this situation denote 
\[
\uplambda(X, |M|)=\uplambda(X, M):= \lambda. 
\]
Thus $\qQ(X)=\uplambda(X, A)$. 

\begin{stheorem}[{\cite{CampanaFlenner}}]
\label{thm:CF}
Let $X$ be a $\QQ$-Fano threefold and let $M$ be a smooth surface on $X$
with $\varkappa(M)=-\infty$.
Assume that $X$ is not rational. Then one 
of the following holds:
\begin{enumerate}
\item\label{thm:CF-WCI}
$X$ is a hypersurface of degree $6$ in $\PP(1^2,2^2,3)$, 
$M$ is a member of the linear system $|\OOO_X(2)|=\big|-\frac23 K_X\big|$, 
and so $\uplambda(X, M)=3/2$,
\item \label{thm:CF-DP}
$X$ is a del Pezzo threefold of degree $\dd(X)\le 3$, 
$M$ is a member of the linear system $\big|-\frac12 K_X\big|$, and $\uplambda(X, M)=2$.
\end{enumerate}
\end{stheorem}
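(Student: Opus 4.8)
The plan is to translate the hypothesis $\varkappa(M)=-\infty$ into numerical constraints via adjunction, and then to match these against the classification of $\QQ$-Fano threefolds of Fano index $\qQ(X)\ge 2$, using non-rationality to discard all but two families. Concretely, since $\uprho(X)=1$ and $X$ is $\QQ$-factorial, $\Cl(X)/\Tors$ is infinite cyclic generated by the class of $A$, so $M\qq mA$ for a unique integer $m\ge 1$ and $-K_X\qq \lambda M$ with $\lambda=\uplambda(X,M)=\qQ(X)/m$. Restricting to $M$, adjunction reads $K_M\qq \tfrac{\lambda-1}{\lambda}K_X|_M+D$, where $D\ge 0$ is the different, supported on $M\cap\Sing(X)$. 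Since $-K_X|_M$ is ample, $\lambda\le 1$ would make $K_M$ pseudo-effective and $\varkappa(M)\ge 0$; thus $\varkappa(M)=-\infty$ forces $\lambda>1$, i.e. $1\le m<\qQ(X)$ and in particular $\qQ(X)\ge 2$. Weighing the anti-ample term $\tfrac{\lambda-1}{\lambda}K_X|_M$ against the effective $D$ then shows that $-K_M$ is ample, so the smooth surface $M$ is a smooth del Pezzo surface; set $d:=K_M^2$.

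Next I would bound the index and enumerate candidates. Restricting $-K_X\qq \lambda M$ to $M$ and squaring yields degree relations such as $M^3=d/(\lambda-1)^2$ and $(-K_X)^3=\lambda^3M^3$ (up to the contribution of $D$), which constrain the numerical type of $X$. Non-rationality together with Theorem \ref{thm:rat} gives $\qQ(X)\le 7$, so only finitely many pairs $(\qQ(X),m)$ with $1\le m<\qQ(X)\le 7$ remain; several of them give the same ratio $\lambda=\qQ(X)/m>1$ but correspond to strictly larger Fano index. For each such pair I would appeal to the classification of $\QQ$-Fano threefolds of index $\ge 2$: either the family carries no smooth del Pezzo divisor in the class $mA$, or it is rational. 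The only survivors should be $(\qQ(X),m)=(2,1)$, giving $\lambda=2$, and $(\qQ(X),m)=(3,2)$, giving $\lambda=3/2$.

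It then remains to identify $X$ in the two surviving cases. If $\lambda=2$ then $\qQ(X)=2$ and $M$ lies in the fundamental class $|A|=\bigl|-\tfrac12K_X\bigr|$; the existence of the smooth del Pezzo member $M$ forces $X$ to be Gorenstein, hence a del Pezzo threefold, and since del Pezzo threefolds of degree $\ge 4$ are rational, non-rationality yields $\dd(X)\le 3$, which is case \ref{thm:CF-DP}. If $\lambda=3/2$ then $\qQ(X)=3$ and $M\qq 2A$; computing the ring $\bigoplus_{k\ge 0}H^0\bigl(X,\OOO_X(kA)\bigr)$ by orbifold Riemann--Roch produces generators in degrees $1,1,2,2,3$ and a single relation in degree $6$, realizing $X$ as the hypersurface $X_6\subset\PP(1^2,2^2,3)$, which is case \ref{thm:CF-WCI}.

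I expect the main obstacle to be the middle step. First, one must control the different $D$ at the terminal quotient singularities through which the smooth surface $M$ may pass, so that the adjunction degree bookkeeping stays exact and the conclusion ``$-K_M$ ample'' is really justified. Second, and more seriously, for each admissible index $\qQ(X)\in\{2,\dots,7\}$ and each $m$ one must certify that every family other than the two listed is either rational or fails to contain such a surface $M$; this is where the bulk of the classification input is needed. By contrast, the explicit reconstruction of $X_6\subset\PP(1^2,2^2,3)$ from its anticanonical ring is routine once the numerical type has been pinned down.
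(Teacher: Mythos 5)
The paper offers no proof of this statement: Theorem~\ref{thm:CF} is imported verbatim from \cite{CampanaFlenner} and used as a black box (its only role here is to feed Corollary~\ref{lemma:DP}). So there is no internal argument to compare yours against, and your proposal has to stand on its own. It does not: the step that carries the entire content of the theorem is missing.

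The fatal gap is the elimination step. For each pair $(\qQ(X),m)$ with $1\le m<\qQ(X)\le 7$ you propose to ``appeal to the classification of $\QQ$-Fano threefolds of index $\ge 2$'' and check that every family other than the two survivors is rational or carries no smooth surface $M\qq mA$ with $\varkappa(M)=-\infty$. No such classification exists in usable form; as this very paper emphasizes in Section~5, what is known is only a list of several hundred \emph{numerical candidates} (basket, index, degree), existence is open for many of them, and neither rationality nor the presence of a smooth member of $|mA|$ is something one reads off a Hilbert series. So the core of your argument delegates exactly the assertion to be proved to an unavailable reference; the result of Campana--Flenner is a genuine geometric theorem, not a corollary of a numerical sieve. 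The same problem resurfaces in your final paragraph: ``the existence of the smooth del Pezzo member forces $X$ to be Gorenstein'' is asserted without proof (ruling out the many non-Gorenstein index-$2$ candidates is part of the content of case \ref{thm:CF-DP}), and the orbifold Riemann--Roch reconstruction of $X_6\subset\PP(1^2,2^2,3)$ in case \ref{thm:CF-WCI} requires first knowing $\B(X)$ and $A^3$, i.e.\ it presupposes the elimination you have not performed. Also note that invoking Theorem~\ref{thm:rat} to get $\qQ(X)\le 7$ makes your argument logically dependent on a much later and harder result, whereas \cite{CampanaFlenner} predates it.

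By contrast, your first step is essentially sound, though for a cleaner reason than the one you give: since $X$ has terminal, hence isolated, singularities, $M\cap\Sing(X)$ is finite, so the different vanishes as a divisor on the normal surface $M$ and adjunction gives $K_M\qq\frac{\lambda-1}{\lambda}K_X|_M$ exactly. There is no ``weighing'' to do: $\lambda\le 1$ makes $K_M$ nef (so $\varkappa(M)\ge 0$), and $\lambda>1$ makes $-K_M$ ample, so $M$ is a smooth del Pezzo surface, in particular rational. This is worth keeping, but it only sets the stage; it does not substitute for the missing classification argument.
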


Note that in \cite{CampanaFlenner} the surface $M$ is supposed to have ample normal bundle.
This is automatically satisfied in our case because $\uprho(X)=1$ by our $\QQ$-Fano assumption.

\begin{scorollary}
\label{lemma:DP}
Let $X$ be a $\QQ$-Fano threefold and let $\MMM$ be a linear system without fixed components on $X$ such that $\uplambda(X, \MMM)>1$ and the pair $(X,\MMM)$ is terminal.
Assume that $X$ has no conic bundle structures.
Then one of the following holds:
\begin{enumerate}
\item 
$X=X_6\subset \PP(1^3,2,3)$ is a double Veronese cone,
\item 
$X=X_4\subset \PP(1^4,2)$ is a \emph{smooth} quartic double solid.
\end{enumerate}
\end{scorollary}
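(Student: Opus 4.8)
The plan is to read everything off the Campana--Flenner classification, Theorem~\ref{thm:CF}, once I have produced the required smooth surface of negative Kodaira dimension and excluded rationality. First I would take a general member $M\in\MMM$. Since the pair $(X,\MMM)$ is terminal, part~\ref{lemma:sing:t} of Lemma~\ref{lemma:sing} shows that $M$ is smooth and lies in the smooth locus of $X$, so that adjunction $K_M=(K_X+M)|_M$ applies. Because $\uprho(X)=1$ and $X$ is $\QQ$-factorial, the effective divisor $M$ is ample; writing $\lambda:=\uplambda(X,\MMM)>1$ we have $-K_X\qq\lambda M$, hence $-(K_X+M)\qq(\lambda-1)M$ is ample and therefore $-K_M$ is ample. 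Thus $M$ is a smooth del Pezzo surface, and in particular $\varkappa(M)=-\infty$.

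Next I would check that $X$ is not rational. Indeed, if $X$ were rational it would be birational to $\PP^3$, and $\PP^3$ carries a conic bundle structure (blow up a point and project, obtaining a $\PP^1$-bundle over $\PP^2$); by definition $X$ would then have a conic bundle structure, contrary to hypothesis. Hence $X$ is non-rational, and Theorem~\ref{thm:CF} applies, leaving exactly two alternatives: either $X=X_6\subset\PP(1^2,2^2,3)$ with $\uplambda(X,M)=3/2$, or $X$ is a del Pezzo threefold with $\dd(X)\le 3$.

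It then remains to discard every possibility other than the two asserted ones, in each case by exhibiting a conic bundle structure. If $\dd(X)=3$, then $X$ is a (possibly singular, terminal Gorenstein) cubic hypersurface in $\PP^4$; choosing a line $\ell\subset X$ through smooth points and blowing it up realizes the projection from $\ell$ as a conic bundle over $\PP^2$ (and a singular cubic threefold is moreover rational), so this case is excluded. If $\dd(X)=2$, Theorem~\ref{thm:DP:cla} identifies $X$ with a quartic double solid $X_4\subset\PP(1^4,2)$, a double cover of $\PP^3$ branched along a quartic $B$; were $X$ singular, $B$ would be singular, and projecting $\PP^3$ from a singular point $p$ of $B$ exhibits a conic bundle, since each line through $p$ meets $B$ in $2$ further points and the induced double cover of that line is a conic. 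Hence a singular $X$ is excluded and we land in conclusion~(2). Finally, if $\dd(X)=1$ then Theorem~\ref{thm:DP:cla} gives conclusion~(1), with no smoothness restriction imposed, consistent with the expectation that singular double Veronese cones may survive.

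The genuinely delicate point is the exotic alternative $X_6\subset\PP(1^2,2^2,3)$: one must show that every member of this Campana--Flenner family admits a conic bundle structure. The naive coordinate projections here produce elliptic fibrations rather than conic bundles, so this demands a separate birational analysis — an explicit Sarkisov link to a conic bundle, or equivalently a proof of rationality for these threefolds — and this is the step I expect to be the main obstacle.
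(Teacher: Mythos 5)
Your reduction to Theorem~\ref{thm:CF} matches the paper's: terminality of the pair together with Lemma~\ref{lemma:sing}\ref{lemma:sing:t} gives a smooth general member $M$ lying in the smooth locus, adjunction and $\uplambda(X,\MMM)>1$ make $M$ a smooth del Pezzo surface, and non-rationality of $X$ is forced because a rational threefold trivially admits a conic bundle structure. Your treatment of the cubic case $\dd(X)=3$ (projection from a line) and of the singular quartic double solid (projection from the image of the singular point, residual intersection with the branch quartic) is essentially the paper's argument, phrased geometrically where the paper writes equations; both are fine.

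The genuine gap is the case $X=X_6\subset\PP(1^2,2^2,3)$ from Theorem~\ref{thm:CF}\ref{thm:CF-WCI}, which you explicitly leave unresolved. This case cannot be deferred: it is one of the two alternatives of the classification, and without excluding it the corollary is simply not proved. Moreover, your assessment that coordinate projections only yield elliptic fibrations here is mistaken --- the paper settles the case by exactly such a projection. Since $X$ has terminal singularities, the monomial $x_3^2$ must occur in the defining sextic, so $\psi=x_3^2+\gamma_3(x_2,x_2')+\phi$, where $\gamma_3$ is a nonzero binary cubic in the two degree-$2$ variables and $\phi$ carries no cubic terms in $x_2,x_2'$. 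Choosing $x_2'$ proportional to a linear factor of $\gamma_3$, one arranges $\gamma_3\in\{x_2x_2'(x_2+x_2'),\ x_2x_2'^2,\ x_2'^3\}$, so that in the chart $x_2'=1$ the equation $\psi=0$ is quadratic in $(x_2,x_3)$. Hence the projection $(x_1,x_1',x_2,x_2',x_3)\mapsto(x_1,x_1',x_2')$ to $\PP(1,1,2)$ is a rational curve fibration, giving $X$ a conic bundle structure and excluding this case. That single normalization is the idea missing from your proposal.
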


\begin{proof}
By Lemma \ref{lemma:sing}\ref{lemma:sing:t} a general member $M\in \MMM$ is smooth 
and is contained in the smooth locus of $X$.
By the adjunction formula $M$ is a del Pezzo surface. In particular, $M$ is rational.
Thus we can apply Theorem~\ref{thm:CF}.

Assume that $X$ is such as in \ref{thm:CF}\ref{thm:CF-WCI}.
Let $\psi(x_1,x_1',x_2,x_2',x_3)=0$ be an equation of $X=X_6\subset \PP(1^2,2^2,3)$, 
where the subscript index of the variables $x_i$ and $x_j'$ is its degree. 
Since the singularities of $X$ are terminal, $\psi$ contains the term $x_3^2$.
Thus $\psi$ can be written in the form
\[
\psi=x_3^2 +\gamma_3(x_2,x_2') +\phi(x_1,x_1',x_2,x_2'),
\]
where $\gamma_3$ is a homogeneous polynomial of degree $3$ and $\phi$ does not contain cubic terms in $x_2,x_2'$. Clearly, $\gamma_3\neq 0$ (otherwise $X$ would be singular along the line $\{x_1=x_1'=x_3=0\}$). Thus after a linear coordinate change we may assume that $\gamma_3=x_2x_2'(x_2+x_2')$, $x_2x_2'^2$, or $x_2'^3$. Then the projection 
\[
X \dashrightarrow \PP(1,1,2),\qquad (x_1,x_1',x_2,x_2',x_3) \longmapsto (x_1,x_1',x_2') 
\]
is a rational curve fibration because
in the affine chart $x_2'=1$ the equation $\psi$ becomes 
quadratic in $x_3,x_2$. Hence $X$ has a conic bundle structure in this case.

Thus we may assume that $X$ is a del Pezzo threefold of degree $\dd(X)\le 3$
(see \ref{thm:CF}\ref{thm:CF-DP}).
If $\dd(X)= 3$, then $X=X_3\subset \PP^4$ is a cubic with terminal singularities
and $M$ is its smooth hyperplane section. In this case the projection $X \dashrightarrow\PP^2$ from a line $l\subset X$ is a rational curve fibration.

It remains to consider the case $\dd(X)= 2$, i.e. the case where
$X=X_4\subset \PP(1^4,2)$ is a quartic double solid.
Suppose that $X$ has a singular point, say $P\in X$. 
Let $\psi(x_0,x_1,x_2,x_3,y)=0$ be an equation of $X=X_4\subset \PP(1^4,2)$, where $\deg (x_i)=1$ and $\deg(y)=2$. 
Since the singularities of $X$ are terminal Gorenstein, $\psi$ contains the term $y^2$. By an obvious coordinate change we may assume that $\psi$ has the form
\[
\psi=y^2+\phi(x_0,x_1,x_2,x_3), 
\]
where $\deg(\phi)=4$. Clearly, $P$ is contained in the hyperplane $\{y=0\}$. 
Hence, by a linear coordinate change we may assume that $P=(1,0,0,0,0)$ and so
\[
\psi=y^2+\phi_4(x_1,x_2,x_3)+x_0\phi_3(x_1,x_2,x_3)+x_0^2\phi_2(x_1,x_2,x_3), 
\]
where $\deg (\phi_i)=i$.
As above, the projection 
\[
X \dashrightarrow \PP(1,1,1)=\PP^2,\qquad (x_0,x_1,x_2,x_3,y) \longmapsto (x_1,x_2,x_3) 
\]
is a rational curve fibration. Hence $X$ has a conic bundle structure.
\end{proof}

\section{Construction.}
\label{sect-constr}
We recall the construction used in the papers \cite{P:degQ-Fano-e},
\cite{P:2010:QFano}, \cite{P:2013-fano}, \cite{P-Reid}.

\begin{assumptions}
Let $X$ be a $\QQ$-Fano threefold.
Consider a linear system $\MMM$ on $X$ without fixed components.
Let $c:=\operatorname {ct} (X, \MMM)$ be the canonical threshold of the pair $(X, \MMM)$ \cite{Corti95:Sark}.
Assume that $\uplambda(X,\MMM)>c$.
\end{assumptions}

\begin{slemma}[see {\cite[Lemma 4.2]{P:2010:QFano}}]
\label{lemma-cthreshold}
Let $P \in X$ be a point of index $r>1$. Assume that
$\mathscr M \sim -tK_X$ near $P$, where $0<t<r$. Then $\ct(X,\MMM) \le 1/t$
and $\beta\ge t\alpha$.
\end{slemma}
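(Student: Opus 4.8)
The plan is to prove the inequality $\beta \ge t\alpha$ by a local computation at $P$ on the Kawamata blowup, and then to read off the bound on the canonical threshold. Since $P$ has index $r>1$, it is a terminal cyclic quotient singularity of type $\frac1r(1,a,r-a)$. I would let $f\colon Y\to X$ be the Kawamata blowup of $P$ (Theorem~\ref{theorem-Kawamata-blowup}), with exceptional divisor $E$; thus $\alpha:=a(E,X)=1/r$, and $f$ is the weighted blowup with weights $\frac1r(1,a,r-a)$. Writing $f^{*}\MMM=\tilde\MMM+\beta E$ with $\tilde\MMM=f^{-1}_{*}\MMM$, the quantity $\beta=\operatorname{ord}_E\MMM$ is what must be bounded below.

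First I would pass to the orbifold chart $\pi\colon(\CC^3,0)\to(X,P)$, on which $\mumu_r$ acts with weights $(1,a,r-a)$. Using the standard identification of the local class group $\Cl(X,P)\cong\ZZ/r$ with the character group $\widehat{\mumu_r}$, one has $-K_X\equiv 1$, so the hypothesis $\MMM\sim -tK_X$ near $P$ says that the pullback of a general member $M\in\MMM$ is cut out by a semi-invariant $\tilde f$ each of whose monomials $x_1^{e_1}x_2^{e_2}x_3^{e_3}$ satisfies $e_1+ae_2+(r-a)e_3\equiv t\pmod r$. The order of such a monomial along $E$ is $\frac1r\big(e_1+ae_2+(r-a)e_3\big)$, and since the nonnegative integers congruent to $t$ modulo $r$ are exactly $t,\,t+r,\,t+2r,\dots$ while $0<t<r$, each such weight is $\ge t$. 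Hence $\beta=\operatorname{ord}_E\MMM\ge t/r=t\alpha$, which is the second assertion; note that positivity $\beta>0$ is automatic, since $0<t<r$ forces the local class of $\MMM$ to be nontrivial, so every member of $\MMM$ passes through $P$.

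It then remains to deduce $\ct(X,\MMM)\le 1/t$. Because $\ct(X,\MMM)=\inf_F a(F,X)/\operatorname{ord}_F\MMM$ over divisors $F$ over $X$, taking $F=E$ gives $\ct(X,\MMM)\le \alpha/\beta\le (1/r)/(t/r)=1/t$. Equivalently, if $\alpha,\beta$ are instead read off from the log crepant divisor produced by the construction, for which $\ct(X,\MMM)=\alpha/\beta$ exactly, then the bound $\ct\le 1/t$ and the inequality $\beta\ge t\alpha$ coincide, so the weight estimate above settles both forms at once.

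The hard part is the bookkeeping in the second paragraph. One must pin down the correct identification of $\Cl(X,P)$ with $\widehat{\mumu_r}$, and in particular verify that the relevant residue is $t$ rather than $r-t$; this rests on checking that $-K_X$ corresponds to the class $1$ for the type $\frac1r(1,a,r-a)$ (equivalently that the Kawamata weights sum to $r+1\equiv 1$), and on confirming that the local equation of a general member is genuinely a single semi-invariant, so that all its monomials share the residue $t\bmod r$. Once these conventions are fixed, the weight estimate is immediate and the rest is formal.
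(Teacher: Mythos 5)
Your overall strategy --- exhibit one exceptional divisor $E$ over $P$ with $a(E,X)=1/r$, prove $\operatorname{ord}_E\MMM\ge t/r$, deduce $\ct(X,\MMM)\le a(E,X)/\operatorname{ord}_E\MMM\le 1/t$, and convert this into $\beta\ge t\alpha$ via the identity $\ct(X,\MMM)=\alpha/\beta$ supplied by the log crepant blowup --- is the standard one for this lemma (the paper itself only cites \cite[Lemma 4.2]{P:2010:QFano}), and your weight bookkeeping in the cyclic quotient case is correct: $-K_X$ does correspond to the character $1$ for type $\frac1r(1,a,r-a)$, and a semi-invariant of weight $\equiv t\pmod r$ with $0<t<r$ has every monomial of $E$-order at least $t/r$.

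The genuine gap is your opening sentence: a terminal threefold point of index $r>1$ is \emph{not} necessarily a cyclic quotient singularity. Besides the points $\frac1r(1,a,r-a)$ there are the non-quotient terminal singularities of types $cA/r$, $cAx/2$, $cAx/4$, $cD/2$, $cD/3$, $cE/2$, all of index $>1$, and the lemma is applied in the paper to such points: in the subcase $\qQ(X)=4$ of Section 4 the bound $\beta_3\ge 2\alpha$ is used for a point $P=f(E)$ which the same argument then proves is \emph{not} a cyclic quotient singularity, so a proof valid only for quotient points would not support that application. Everything you build on after the first sentence (the explicit weights, the Kawamata weighted blowup of Theorem~\ref{theorem-Kawamata-blowup}, the toric order computation) is unavailable in the general case. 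The repair is standard and in fact shortens the argument: by Kawamata--Markushevich, over \emph{any} terminal threefold point of index $r$ there exists an exceptional divisor $E$ with $a(E,X)=1/r$; the hypothesis says that $M+tK_X$ is principal near $P$ for a general $M\in\MMM$, so pulling back a local equation shows that the coefficient of $E$ in $f^*(M+tK_X)$, namely $\operatorname{ord}_E(M)-t\,a(E,X)=\operatorname{ord}_E(M)-t/r$, is an integer. Since $\operatorname{ord}_E(M)\ge 0$ and $0<t/r<1$, this forces $\operatorname{ord}_E(M)\ge t/r$ with no coordinates, no choice of semi-invariant, and no sign convention to verify --- which also disposes of the ``hard part'' you flag in your last paragraph.
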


Consider an extremal log crepant blowup $f: \tilde X \to X$ with respect to $K_X+c \MMM$
(Corollary~\ref{cor:can-blowup}).
Let $E$ be the exceptional divisor.
Recall that
$\tilde X$ has only terminal $\QQ$-factorial singularities.
We can write
\begin{equation} \label{equation-1}
\begin{array}{lll}
K_{\tilde X} &\qq & f^*K_X+\alpha E,
\\[2pt]
\tilde\MMM &\qq & f^*\MMM- \beta E.
\end{array}
\end{equation}
where $\alpha,\, \beta \in \QQ_{>0}$.
Then $c=\alpha/\beta$ and
\[
K_{\tilde X} +c \tilde \MMM \qq f^*(K_X+c \MMM).
\]
Since $\uplambda(X,\MMM)>c$, the divisor $-( K_{\tilde X} +c \tilde \MMM)$ is nef.
Take $\lambda=\uplambda(X,\MMM)$. Then 
\[
K_{\tilde X} +\lambda\tilde \MMM \qq f^*(K_X+\lambda \MMM)+\alpha(1-\lambda/c) E
\qq \alpha(1-\lambda/c) E.
\]
Put $\delta:=\alpha(\lambda/c-1)$. Then 
\begin{equation}
\label{eq:constr:Klambda}
K_{\tilde X} +\lambda \tilde \MMM+\delta E\qq0,\quad \text{where $\delta >0$.}
\end{equation} 
Run the MMP with respect to $K_{\tilde X}+c \tilde\MMM$.
We obtain the following diagram (Sarkisov link)
\begin{equation}
\label{diagram-main}
\vcenter{\xymatrix{
&\tilde X\ar[dl]_{f} \ar@{-->}[r]^\chi & \bar X\ar[dr]^{\bar f}&
\\
X\ar@{-->}[rrr]&&&\hat X 
}}
\end{equation}
Here the varieties $\tilde X$ and $\bar X$
have only terminal $\QQ$-factorial singularities, $\chi$ is a composition of $K_{\tilde X}+c \tilde\MMM$-log flips,
$\uprho (\tilde X)=\uprho (\bar X)=2$, and
$\bar{f}: \bar{X}\to \hat{X}$ is an extremal $K_{\bar{X}}$-negative Mori contraction.
In particular, $\rk \Cl(\hat{X})=1$.

In what follows, for a divisor (or a linear system) $N$ on $X$
by $\tilde N$ and $\bar N$ we denote
proper transforms of $N$
on $\tilde X$ and $\bar{X}$ respectively. If $\bar f$ is birational, then 
$\hat N$ denotes the 
proper transform of $N$
on $\hat X$.
Apply $\chi_*$ to \eqref{eq:constr:Klambda}:
\begin{equation}
\label{eq:constr:Klambda-1}
K_{\bar X} +\lambda \bar \MMM+\delta \bar E\qq0.
\end{equation}

\begin{slemma}[cf. {\cite[Lemma~4.7]{P:2019:rat:Q-Fano}}]
\label{lemma:MFS}
If $\MMM$ is not composed of a pencil and $\bar f$ is not birational, then $X$ has a conic bundle structure.
\end{slemma}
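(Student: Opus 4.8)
The plan is to produce, on $\bar X$, a fibration whose general fibre is a rational curve. Since $X$ is birational to $\bar X$ (via $f^{-1}$ followed by $\chi$) and the property of carrying a conic bundle structure is, by definition, a birational one, Sarkisov's theorem \cite{Sarkisov:82e} will then deliver a conic bundle structure on $X$. First I would record what $\bar f$ can be: as $\bar f$ is a $K_{\bar X}$-negative extremal contraction with $\uprho(\bar X)=2$ which is \emph{not} birational, it is a Mori fibre space with $\uprho(\hat X)=1$; hence $\dim\hat X\in\{1,2\}$, the value $0$ being impossible as it would force $\uprho(\bar X)=1$.

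If $\dim\hat X=2$, then $\bar f$ is a conic bundle: its general fibre is one-dimensional and $-K_{\bar X}$ is $\bar f$-ample, so the general fibre is a smooth rational curve. Thus $\bar f$ is already a rational curve fibration and the lemma follows in this case; note that the hypothesis that $\MMM$ is not composed of a pencil is not needed here.

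The remaining, and main, case is $\dim\hat X=1$, so $\hat X\cong\PP^1$ and $\bar f\colon\bar X\to\PP^1$ is a del Pezzo fibration. Let $S$ be a general fibre, a smooth del Pezzo surface, and set $\LLL:=\bar\MMM|_S$. Because $\MMM$, and hence $\bar\MMM$, is not composed of a pencil, the rational map given by $\bar\MMM$ cannot factor through $\bar f$, so $\LLL$ is a nonzero movable linear system on $S$. As the relative Picard number of $\bar f$ is $1$ and a movable class restricts to a nef one, $\bar\MMM$ is $\bar f$-ample and therefore $\LLL$ is ample. Restricting \eqref{eq:constr:Klambda-1} to $S$ and using $K_{\bar X}|_S\qq K_S$ gives $-K_S\qq\lambda\LLL+\delta\,\bar E|_S$ with $\delta\,\bar E|_S$ effective. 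For a general member $C\in\LLL$ one then computes, using $C\equiv\LLL$ and $\lambda=\uplambda(X,\MMM)>1$,
\[
2p_a(C)-2=(K_S+C)\cdot C\le (1-\lambda)\,C^2<0,
\]
since $C^2=\LLL^2>0$ by ampleness; hence $p_a(C)=0$ and the general member of $\LLL$ is a rational curve.

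It remains to assemble these fibrewise families into a fibration of $\bar X$, and this is the step I expect to be the main obstacle. The plan is to choose, over the generic point of $\PP^1$, a general pencil inside $\LLL$: its general member is a general member of $\LLL$, hence a rational curve, so the pencil defines a rational curve fibration of the generic fibre over $k(\PP^1)$. Spreading this out should produce a dominant rational map $\bar X\dashrightarrow\Sigma$ onto a surface $\Sigma$, fibred over $\PP^1$, whose general fibre is one of the curves $C$, i.e. a rational curve fibration; Sarkisov's theorem \cite{Sarkisov:82e} then gives a conic bundle structure on $\bar X$ and so on $X$. The delicate point is precisely this globalisation: one must work over $k(\PP^1)$, resolve the base locus of the chosen pencil (a multisection of $\bar f$), and verify that the resulting map is genuinely of fibre type onto a surface rather than degenerating. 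The ampleness of $\LLL$ together with the rationality of its general member, both consequences of $\lambda>1$ and of the relative Picard number being $1$, are exactly what make this construction go through.
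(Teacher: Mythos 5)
Your reduction to the case $\dim\hat X=1$ and your fibrewise computation are fine, but the proof is not complete: the step you yourself flag as ``the main obstacle'' --- spreading the fibrewise pencils out to a genuine rational curve fibration $\bar X\dashrightarrow\Sigma$ over $k(\PP^1)$ --- is exactly the crux, and you only describe a plan for it. To carry it out you would have to check that the pencil can be chosen inside $\LLL_\eta$ over the non-closed field $k(\PP^1)$, that a general member of $\bar\MMM|_S$ is actually \emph{irreducible} (irreducibility of a general $M\in\bar\MMM$ does not immediately give irreducibility of $M\cap S$, and your adjunction bound $p_a(C)\le 0$ by itself does not rule out reducible or disconnected $C$), and that the resulting map really has two-dimensional image. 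None of this is fatal, but as written the argument stops exactly where the work begins.

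The paper avoids all of this with a shorter argument that you missed: since $\bar E$ is a prime non-movable divisor and $\uprho(\bar X/Z)=1$, the restriction $\bar E|_F$ to a general fibre $F$ is a nonzero \emph{integral} ample divisor, as is $\bar\MMM|_F$; hence $-K_F=\lambda\bar\MMM|_F+\delta\bar E|_F$ with $\lambda>1$, $\delta>0$ forces $-K_F\cdot\ell>1$ for every curve $\ell\subset F$, so $F$ has no $(-1)$-curves and $F\simeq\PP^2$ or $\PP^1\times\PP^1$. A del Pezzo fibration over $\PP^1$ with such fibres is rational (Tsen's theorem), and a rational threefold trivially has conic bundle structures. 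This uses the integrality of the restricted divisors --- information your purely numerical adjunction estimate discards --- and bypasses the globalisation problem entirely.
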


\begin{proof}
If $\dim(\hat{X})=2$, then $\bar f$ is a $\QQ$-conic bundle and we are done.
Assume that $\dim(\hat{X})=1$. Then $\bar f$ is a del Pezzo fibration and $\hat{X}\simeq 
\PP^1$.
By our assumption $\dim(\MMM)\ge 2$ and a general member of $\MMM$ is irreducible. Hence $\MMM$ 
is not $\bar f$-horizontal, i.e. $\MMM$ is not a pull-back of a linear system 
on $\hat{X}$. Since the divisor $\bar E$ is not movable, it is not contained in fibers
and so $\bar E$ is $\bar f$-ample.
For a general fiber $F$ by the adjunction formula and \eqref{eq:constr:Klambda-1} we have 
\[
-K_F=-K_{\bar X}|_F=\lambda \bar \MMM|_F+\delta \bar E|_F, 
\]
where both $\bar \MMM|_F$ and $\bar E|_F$ are integral ample divisors. Since $\lambda >1$,
the surface $F$ is isomorphic either $\PP^2$ or $\PP^1\times \PP^1$. In this case,
$\bar X$ is rational and so it has a lot of conic bundle structures. 
\end{proof}

\subsection{}
Assume that the contraction $\bar{f}$ is birational.
Then $\hat{X}$ is a $\QQ $-Fano threefold.
Denote
$\hat{q}:=\qQ (\hat{X})$.
\begin{slemma}
\label{lemma:Econtr}
If the contraction $\bar{f}$ is birational, then the divisor $\bar E$ is not contracted by $\bar f$.
\end{slemma}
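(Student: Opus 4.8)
The plan is to argue by contradiction: I assume that $\bar f$ contracts $\bar E$ and show that this forces $X$ and $\hat X$ to be isomorphic, which is incompatible with the fact that $f$ records $E$ as a \emph{log crepant} divisor while $\bar f$ records it as a \emph{strictly discrepant} one. First I would pin down the shape of the composite $\sigma:=\bar f\comp\chi\comp f^{-1}\colon X\dashrightarrow\hat X$. By Corollary~\ref{cor:can-blowup} the morphism $f$ contracts the single prime divisor $E$; the map $\chi$ is an isomorphism in codimension one (a composition of flips), carrying $E$ to $\bar E$; and $\bar f$ is birational with $\QQ$-factorial target of Picard number one, hence a divisorial contraction whose only contracted divisor is, by assumption, $\bar E$. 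Tracking divisors through the three maps, the unique divisor $E$ created by $f^{-1}$ is exactly the one destroyed by $\bar f$, so $\sigma$ neither extracts nor contracts any divisor: it is an isomorphism in codimension one.

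Next I would upgrade this to an honest isomorphism. Both $X$ and $\hat X$ are $\QQ$-Fano threefolds, so they are $\QQ$-factorial of Picard number one with $-K$ ample; for such varieties any nonzero effective Weil divisor lies in the ample ray of $N^1$, so the proper transform $-K_X=\sigma^{-1}_*(-K_{\hat X})$ is again ample. Since $\sigma$ is an isomorphism in codimension one and $X,\hat X$ are normal, the conditions defining global sections are checked in codimension one, whence the two anticanonical section rings $\bigoplus_m H^0(X,-mK_X)$ and $\bigoplus_m H^0(\hat X,-mK_{\hat X})$ coincide as graded subrings of the common function field. Taking $\Pro$ of these ample rings identifies $X$ with $\hat X$ compatibly with $\sigma$. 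Thus $\sigma$ is an isomorphism of pairs $(X,c\MMM)\cong(\hat X,c\hat\MMM)$, where $\hat\MMM=\sigma_*\MMM$.

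Finally I would compare the discrepancy of the divisorial valuation $\nu$ determined by $E$ (equivalently by $\bar E$, as $\chi$ is an isomorphism in codimension one) on the two sides. Because $f$ is log crepant for $(X,c\MMM)$, one has $a(\nu,X,c\MMM)=0$. On the other hand $\bar f$ arises as a step of the $(K_{\tilde X}+c\tilde\MMM)$-MMP; since $\uplambda(X,\MMM)>c$ the class $-(K_X+c\MMM)$ is ample, so $K+c\MMM$ is not pseudo-effective and every contraction of this MMP, in particular $\bar f$, is strictly $(K_{\bar X}+c\bar\MMM)$-negative. Hence the discrepancy $a(\nu,\hat X,c\hat\MMM)=a(\bar E,\hat X,c\hat\MMM)$ of the contracted divisor is positive. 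But the isomorphism of pairs just established gives $a(\nu,X,c\MMM)=a(\nu,\hat X,c\hat\MMM)$, so $0>0$, a contradiction. Therefore $\bar E$ is not contracted by $\bar f$.

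The main obstacle is precisely the middle step: promoting ``isomorphism in codimension one'' to a genuine isomorphism. The discrepancy of $\nu$ is only \emph{weakly} monotone along the $(K+c\MMM)$-MMP, since it may jump upward across the flips in $\chi$; thus the bare codimension-one identification is fully consistent with $0<a(\nu,\hat X,c\hat\MMM)$ and yields no contradiction on its own. It is the rigidity forced by $\uprho=1$ together with the ampleness of $-K$ — which turns the small modification $\sigma$ into an actual isomorphism and so equates the two discrepancies — that collapses the allowed inequality into the impossible equality. The remaining points requiring care are the verification that the section rings are genuinely identified and that the valuation $\nu$ corresponds to itself (rather than to some other valuation) under $\sigma$.
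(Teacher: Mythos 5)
Your proof is correct and follows the same route as the paper: assuming $\bar E$ is contracted, the composite $\bar f\comp\chi\comp f^{-1}\colon X\dashrightarrow\hat X$ is an isomorphism in codimension one, hence (both sides being $\QQ$-Fano of Picard number one with ample anticanonical class) an honest isomorphism of pairs, which is then killed by a discrepancy comparison. The only cosmetic difference is that the paper states the contradiction as a strict drop in the number of $(K+c\MMM)$-crepant divisors under the link, whereas you track the single valuation of $E$, crepant for $(X,c\MMM)$ but of strictly positive discrepancy for $(\hat X,c\hat\MMM)$ because $\bar f$ is a $(K+c\MMM)$-negative divisorial contraction; this is the same observation.
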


\begin{proof}
Indeed, otherwise the map $\bar f\comp\chi\comp f^{-1}:X \dashrightarrow \hat X$ 
would be an isomorphism in codimension one.
Hence it is an isomorphism. On the other hand, 
the number of $K_{\hat X}+c\hat \MMM$-crepant divisors on $\hat X$ is strictly less than the number of $K_{X}+c\MMM$-crepant divisors on $X$, a contraction.
\end{proof}

It follows from \eqref{eq:constr:Klambda-1} that
\begin{equation*}
K_{\hat X} +\lambda \hat \MMM+\delta \hat E\qq0.
\end{equation*} 
Therefore, 
\begin{equation}
\label{eq:constr:Klambda3}
\uplambda(\hat X, \hat \MMM)> \uplambda (X,\MMM).
\end{equation} 

\subsection{}
Let $\NNN_k$ be a non-empty linear system on $X$ such that
$q\NNN_k\sim k(-K_X)$ 
(here we allow $\NNN_k$ have fixed components).
Let $\Theta$ be an ample Weil divisor on $\hat{X}$
generating $\Cl(\hat{X})/\Clt{\hat{X}}$.
We can write
\[
\hat{E} \qq e\Theta,\qquad \hat\NNN_k \qq s_k\Theta,
\]
where $e\in \ZZ_{>0}$ by Lemma~\ref{lemma:Econtr} and $s_k \in \ZZ_{\ge0}$. 
Thus $s_k=0$ if and only if 
$\dim (\NNN_k)=0$ and a unique element $\bar N$ of $\bar\NNN_k$ is $\bar{f}$-exceptional.
As in \eqref {equation-1}, we write
\begin{equation} \label{equation-12}
\tilde \NNN_k\qq f^*\NNN_k- \beta_k E.
\end{equation}
The relations \eqref {equation-1} and \eqref {equation-12} give us
\begin{equation*} \label{equation-12-divisors}
k K_{\tilde{X}}+q \tilde\NNN_k \sim -(q \beta_k-k \alpha) E,
\end{equation*}
where $q \beta_k-k \alpha$ is an integer. From this we obtain
\begin{equation} \label{equation-main}
k \hat{q}=q s_k+(q \beta_k-k \alpha) e.
\end{equation}

\subsection{}
We need certain information on linear systems on $\QQ$-Fano threefolds 
of large Fano index. Most of these facts are contained in the Graded Ring Database \cite{GRD}
or can be obtained by direct computations using algorithms described in 
\cite{Suzuki-2004}, \cite{Brown-Suzuki-2007j}, \cite{P:2010:QFano}, \cite{P:2019:rat:Q-Fano}.

\begin{sproposition}
\label{prop:search0} 
Let $X$ be a $\QQ$-Fano threefold and let $M$ be a Weil divisor on $X$ such that 
$\dim |M|\ge 2$ and $\uplambda(X,M)>1$.
Then the linear system $|M|$ is not composed of a pencil.
\end{sproposition}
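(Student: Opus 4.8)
The plan is to argue by contradiction: assume that the complete linear system $|M|$ is composed of a pencil and derive a numerical situation that Riemann--Roch on $X$ excludes. First I would analyse the pencil. Since $X$ is a $\QQ$-Fano threefold it is rationally connected, so the image of $\phi_{|M|}$, being a curve dominated by $X$, is rational; passing to its normalization I obtain a pencil $\psi\colon X\dashrightarrow\PP^1$ through which $\phi_{|M|}$ factors. Writing $M\qq \Phi+kF$, where $\Phi\ge0$ is the fixed part and $F$ is a general fibre of $\psi$, the hypothesis $\dim|M|\ge2$ forces $k\ge2$ (a nondegenerate curve in $\PP^{\dim|M|}$ has degree $\ge\dim|M|$), and a general $F$ is an irreducible divisor. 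The crucial observation is that $|F|$ is itself a pencil, i.e.\ $\dim|F|=1$: from $\dim\overline{\phi_{|M|}(X)}=1$ one gets $\dim\overline{\phi_{|kF|}(X)}=1$, hence $\dim\overline{\phi_{|F|}(X)}\le1$, so $|F|$ is composed of a pencil; as its general member $F$ is irreducible, $|F|$ must equal that pencil. Thus $h^0(X,\OOO_X(F))=2$.

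Next I would record the positivity. Since $\uprho(X)=1$ and $F$ is a nonzero effective divisor, $F$ is ample, and comparing coefficients in $-K_X\qq\uplambda(X,M)M\qq\uplambda(X,F)F$ and $M\qq\Phi+kF$ gives $\kappa:=\uplambda(X,F)\ge k\,\uplambda(X,M)>k\ge2$. By Seidenberg's theorem a general $F$ is a normal surface, and adjunction yields $K_F\qq(K_X+F)|_F\qq-(\kappa-1)B$, where $B:=F|_F$ is the base curve of the pencil, an ample divisor on $F$ with $B^2=F^3>0$. Hence $-K_F\qq(\kappa-1)B$ with $\kappa-1>1$, so $F$ is a del Pezzo surface and $(-K_F)^2=(\kappa-1)^2F^3$. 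Combined with the adjunction/genus inequality on $F$ (which forces $B$ rational and $(\kappa-2)F^3\le2$) and the bound $\uplambda(X,F)\le\qQ(X)$ on the Fano index, this confines $(\kappa,F^3)$ to a short explicit list of numerical types.

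Finally I would eliminate each type. Because $F-K_X\qq(1+\kappa)F$ is ample, Kawamata--Viehweg vanishing gives $h^{>0}(X,\OOO_X(F))=0$, so $h^0(X,\OOO_X(F))=\chi(\OOO_X(F))$. Evaluating Reid's orbifold Riemann--Roch with $-K_X\qq\kappa F$,
\[
\chi(\OOO_X(F))=1+\tfrac1{12}(1+\kappa)(2+\kappa)F^3+\tfrac1{12}F\cdot c_2(X)+\sum_{P\in\B(X)}c_P(F),
\]
and running through the finitely many numerical types, for which the basket $\B(X)$ and $F\cdot c_2(X)$ are read off from the Graded Ring Database \cite{GRD}, I expect $\chi(\OOO_X(F))\ge3$ in every case, contradicting $h^0(X,\OOO_X(F))=2$.

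I expect the main obstacle to be precisely this last step: the conceptual part (the factorization through a pencil and the identity $h^0(X,\OOO_X(F))=2$) is clean, but converting it into a contradiction is a genuinely arithmetic matter, requiring the orbifold Riemann--Roch correction terms and the classification of the basket for high-index $\QQ$-Fano threefolds tabulated in \cite{GRD}. A secondary technical point is justifying the adjunction computation when the general member $F$ meets the terminal singularities of $X$, where $K_F=(K_X+F)|_F$ and the genus inequality must be applied on the possibly non-Gorenstein normal surface $F$.
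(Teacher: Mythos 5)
Your strategy is the same as the paper's in its essential mechanism: assume $|M|$ is composed of a pencil, extract the underlying pencil (your $|F|$, the paper's $|L|$) with $\dim|F|=1$ and $\uplambda(X,F)>2$, hence $\qQ(X)\ge 3$, and then kill the configuration by a purely numerical check (orbifold Riemann--Roch over the finite list of numerical candidates, i.e.\ a Graded Ring Database search). The paper does exactly this in three lines: write $|M|=D+m|L|$ with $m=\dim|M|\ge 2$, observe $\dim|2L|=2$ and $\qQ(X)\ge 3$, and run the search for candidates with $\dim|L|=1$ and $\dim|2L|=2$, finding none. All of your intermediate geometry --- rational connectedness to identify the base of the pencil with $\PP^1$, adjunction to the general fibre, the del Pezzo structure of $F$, the genus bound $(\kappa-2)F^3\le 2$ --- does not appear in the paper; it is plausible but would need real care (adjunction with the different on a possibly non-Gorenstein normal fibre, since $(X,|F|)$ need not be canonical), and it only serves to prune the candidate list before the final arithmetic step, which both you and the paper ultimately delegate to a computation.

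The one substantive divergence is in the final filter, and it is where your version could fail. You propose to contradict $h^0(\OOO_X(F))=2$ by showing $\chi(\OOO_X(F))\ge 3$ for every candidate, i.e.\ you search only on the single condition $\dim|F|=1$ (with $\uplambda(X,F)>2$). The paper searches on the \emph{conjunction} $\dim|L|=1$ and $\dim|2L|=2$; the second condition is also forced by your setup (since $\dim|M|=m$ gives $\dim|2L|\le \dim|mL|-(m-2)=2$, while $2|L|\subseteq|2L|$ gives $\ge 2$), but you never derive or use it. There is no a priori guarantee that the weaker filter already returns an empty list: a numerical candidate with $h^0(jA)=2$, $\qQ(X)/j>2$ but $h^0(2jA)\ge 4$ would defeat your argument while being harmlessly discarded by the paper's. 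Since neither you nor the paper exhibits the computation explicitly, you should at minimum add the condition $\dim|2F|=2$ to the data you feed into the Riemann--Roch check; with it, your proof collapses onto the paper's.
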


\begin{proof}
Assume the contrary, that is, $|M|=D+m|L|$, where $D$ is the fixed part of $|M|$ and $|L|$ is a pencil without fixed components. Then $\dim |M|=m\ge 2$. Replacing $|M|$ with $m |L|$ we may assume that $D=0$. 
Clearly, $\dim |2L|=2$ and $\qQ(X)\ge 3$ in our case.
Now, running computer search for 
$\QQ$-Fano threefolds with $\dim |L|=1$ and $\dim |2L|=2$, we get a contradiction.
Interested readers can find a simple PARI/GP code \cite{PARI} 
on the author's webpage \url{https://homepage.mi-ras.ru/~prokhoro/programs/pencil.gp}.
\end{proof}

As above, using computer search one can find that there are 30 possible Hilbert series 
of $\QQ$-Fano threefolds such that $\qQ(X)\ge 3$, $\Clt{X}=0$, and $\df(X)=3$.
As a consequence of this list one obtains.

\begin{sproposition}
\label{prop:search1} 
Let $X$ be a $\QQ$-Fano threefold with $\qQ(X)\ge 3$, $\Cl(X)\simeq \ZZ$ and $\df(X)=3$. Then the following assertions hold.
\begin{enumerate}
\item \label{prop:search1:q=6}
$\qQ(X)\le 7$, $\qQ(X)\neq 6$, and the basket $\B(X)$ contains at 
most one point of index $\ge 8$.

\item
\label{prop:search1:q=7} 
If $\qQ(X)=7$, then $|A|=\varnothing$ and $\dim |2A|=\dim |3A|=0$.

\item\label{prop:search1:q=5} \label{prop:search1:q=5a} 
If $\qQ(X)=5$, then $\dim |2A|=0$ and $\dim |3A|= 1$.

\item\label{prop:search1:q=4} 
If $\qQ(X)=4$, then $\dim |A|\le 0$ and $1\le \dim |2A|\le 2$.
If moreover, $\dim |2A|=2$, then $\dim |A|= 0$, $A^3=2/11$ and $\B(X)=(11)$.
\end{enumerate}
\end{sproposition}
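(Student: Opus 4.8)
The plan is to reduce each assertion to the finite enumeration described just before the statement, the engine of which is orbifold Riemann--Roch. Since $\Cl(X)\simeq\ZZ$ is torsion-free, we have $\qW(X)=\qQ(X)=q$ and $-K_X\sim qA$, where $A$ is an ample generator of $\Cl(X)$. Every Weil divisor is linearly equivalent to $mA$ for some $m\in\ZZ$, and $-(K_X+mA)=(q-m)A$ is ample if and only if $m\le q-1$. As $A$ is ample, $|mA|=\varnothing$ for $m<0$, so
\[
\df(X)=\max_{0\le m\le q-1}\dim|mA|,
\]
and the hypothesis $\df(X)=3$ is precisely the condition that $\dim|mA|\le 3$ for all $0\le m\le q-1$, with equality for at least one such $m$.

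Next I would compute the $\dim|mA|$ by Riemann--Roch. For $m\ge 0$ the divisor $mA-K_X=(m+q)A$ is ample, so Kawamata--Viehweg vanishing gives $h^i(\OOO_X(mA))=0$ for $i>0$, and hence $\dim|mA|=\chi(\OOO_X(mA))-1$ (with the convention $\dim\varnothing=-1$). Reid's orbifold Riemann--Roch expresses $\chi(\OOO_X(mA))$ as a function of $m$ whose coefficients are determined by $\chi(\OOO_X)=1$, the degree $A^3$, the number $A\cdot c_2(X)$, and local contributions $c_P(mA)$ summed over the basket $\B(X)$. Thus the entire Hilbert series $\sum_{m\ge0}\dim|mA|\,t^m$ is a function of the numerical data $(q,A^3,\B(X))$.

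These data are severely constrained: $A^3>0$ is a positive rational with denominator dividing $\prod_P r_P$; every plurigenus $\dim|mA|$ must be a non-negative integer; and the relation $-K_X\cdot c_2(X)=24-\sum_P\bigl(r_P-\tfrac1{r_P}\bigr)$ together with $-K_X\cdot c_2(X)>0$ forces $\sum_P(r_P-\tfrac1{r_P})<24$, bounding both the indices and the number of basket points. Running the enumeration (the algorithms of \cite{Suzuki-2004}, \cite{Brown-Suzuki-2007j}, \cite{P:2010:QFano}, cross-checked against \cite{GRD}) over all candidate data with $q\ge3$, $\Clt{X}=0$, and the derived condition $\df(X)=3$ produces exactly the 30 Hilbert series cited above. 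Each of the four assertions is then read off from this finite list entry by entry: the permissible values of $q$ (yielding $q\le7$ and $q\neq6$), the fact that at most one basket point has index $\ge8$, and the small values $\dim|A|,\dim|2A|,\dim|3A|$ in the cases $q=7,5,4$; in particular, among the $q=4$ series the equality $\dim|2A|=2$ singles out the unique entry with $A^3=2/11$ and $\B(X)=(11)$.

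The main obstacle is not any single inference from the list but the completeness of the enumeration underlying it. One must verify that the constraints used (positivity of $A^3$, integrality and non-negativity of all plurigenus values, the $c_2$-relation, and terminality of each basket point) genuinely bound the search to finitely many candidates, and that the translation of $\df(X)=3$ into the Hilbert-series condition ``$\dim|mA|\le3$ for all $m<q$ with equality for some such $m$'' has been imposed exactly, so that no admissible case is lost and none with larger $\df$ is retained. Granting the correctness of this finite computation, assertions \ref{prop:search1:q=6}--\ref{prop:search1:q=4} follow.
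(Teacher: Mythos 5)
Your proposal is correct and follows essentially the same route as the paper: the paper's proof of this proposition is precisely the appeal to the finite computer enumeration of Hilbert series (via orbifold Riemann--Roch, vanishing, the $c_2$-inequality, and the translation of $\df(X)=3$ into bounds on $\dim|mA|$ for $0\le m\le q-1$) carried out with the algorithms of \cite{Suzuki-2004}, \cite{Brown-Suzuki-2007j}, \cite{P:2010:QFano} and \cite{GRD}, from which the four assertions are read off. You merely make explicit the theoretical underpinnings that the paper delegates to those references.
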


Similarly, we have.

\begin{sproposition}
\label{prop:search2} 
Let $X$ be a $\QQ$-Fano threefold with $\qQ(X)\ge 3$, $\df(X)=3$ and $\Clt{X}\simeq \ZZ/n\ZZ$, where $n>1$.
Let $T$ be a generator of $\Clt{X}$. Then the following assertions hold.
\begin{enumerate}
\item \label{prop:search2:q=6}
$\qQ(X)\le 5$, 

\item
\label{prop:search2:q=5}
If $\qQ(X)=5$, then $\qQ(X)=5$, $n=2$, $\B=(4^2, 12)$, $A^3=1/12$. Moreover,
$\dim |A|=0$,
$|A+T|=\varnothing$,
$\dim |2A|=\dim |2A+T|=0$,
$\dim |3A|=\dim |3A+T|=1$.

\item
\label{prop:search2:q=4}
Assume that $\qQ(X)=4$ and $\df(X)=3$. 
Then $n\in \{2,\, 5\}$ and for a suitable choice of $A$ we have $|A|=\varnothing$, $\dim |2A|=1$, and $\dim |3A+kT|=3$ for any $k$. Moreover, 
\begin{enumerate}
\item 
if $n=2$, then $\dim |A+T|=0$ and $0\le \dim |2A+T|\le 1$;
\item 
if $n=5$, then $\dim |A+kT|=0$ and $\dim |2A+kT|=1$ for $k\not \equiv 0\mod 5$.
\end{enumerate}

\item
\label{prop:search2:q=3}
Assume that $\qQ(X)=3$ and $\df(X)=3$. Then $n\in \{2,\, 3\}$ and $|A+\Lambda|\neq \varnothing$ for any
$\Lambda\in \Clt{X}$. Moreover, if $n=2$ and $\dim |2A|<3$, then $A^3=15/28$ and $\B=(2, 4, 14)$, and $|A|\neq \varnothing$.

\end{enumerate} 
\end{sproposition}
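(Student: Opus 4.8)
The plan is to run, now in the presence of torsion, the same enumeration of Hilbert series that produced Proposition~\ref{prop:search1}, and to read off the stated numerical data from the resulting finite list. The engine is orbifold Riemann--Roch. Writing $\B(X)=\{(r_i,b_i)\}$ for the basket and fixing the numerical invariant $A^3$, Reid's plurigenus formula expresses each $\chi(\OOO_X(D))$, and hence $\dim|D|$ once the relevant higher cohomology is known to vanish, as an explicit function of the basket, of $A^3$, and of the class of $D$ in $\Cl(X)$; the periodic local contributions at each point $P$ depend on $b_i$ and on the class of $D$ there, so the torsion datum enters genuinely. Since $\chi(\OOO_X)=1$ for a $\QQ$-Fano threefold and $-K_X\qq qA$ with $q=\qQ(X)\ge 3$, every admissible tuple $(q,n,\B(X),A^3)$ is constrained by the integrality of all the numbers $\chi(\OOO_X(mA+kT))$, together with the positivity forced by ampleness of $-K_X$.

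First I would record how the hypothesis $\df(X)=3$ translates into these data. Ampleness is a numerical condition and torsion is numerically trivial, so for $M\qq mA+kT$ the divisor $-(K_X+M)\qq (q-m)A-kT$ is ample exactly when $0\le m\le q-1$. Hence $\df(X)=\max\{\dim|mA+kT| : 0\le m\le q-1,\ k\in\ZZ/n\ZZ\}$, and the assumption says this maximum equals $3$: some twisted class of weight at most $q-1$ moves in a net of dimension exactly $3$, and none moves in higher dimension. Fed into the Riemann--Roch expressions, this is the inequality that cuts the search down, exactly as in the torsion-free case.

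To make the enumeration finite one bounds the basket. The Kawamata-type inequality coming from $-K_X\cdot c_2(X)>0$ limits $\sum_i(r_i-1/r_i)$, hence the indices $r_i$ and their number, while the global-index relation forces $q$ to divide certain Riemann--Roch contributions; it is here that the bound $\qQ(X)\le 5$ of part~(i) should emerge, sharper than the torsion-free bound because each residue class in $\Clt{X}$ now carries its own Hilbert function and the shifts impose extra integrality. With the basket bounded, a finite computer search over $(q,n,\B,A^3)$, retaining only tuples for which all $\dim|mA+kT|$ are consistent with $\df(X)=3$, produces the complete list; for each surviving value $q\in\{3,4,5\}$ one then reads off the forced order $n$, the basket, the value of $A^3$, and the dimensions of the relevant twisted systems, which are precisely the assertions of parts~(ii)--(iv).

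I expect the main obstacle to be the finiteness and sharpness of the bounds in the torsion case, together with the correct bookkeeping of the periodic contributions. Unlike the torsion-free situation one must track a separate Hilbert function for every class in $\Clt{X}$, and the interaction between the torsion twist and the local orbifold data at each basket point must be handled exactly in Reid's formula. The delicate point is to organize the positivity and integrality constraints so that the search provably terminates and no admissible tuple is overlooked, while confirming that each tuple on the final list survives all the coarse invariants; once that is arranged, the extraction of the stated values is mechanical.
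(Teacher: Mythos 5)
Your plan coincides with the paper's own argument: the paper proves this proposition by exactly the computer search you describe (the same orbifold Riemann--Roch enumeration used for Proposition~\ref{prop:search1}, now keeping a separate Hilbert function for each class in $\Clt{X}$), and simply states the result with the remark ``Similarly, we have,'' citing the algorithms of \cite{Suzuki-2004}, \cite{Brown-Suzuki-2007j}, \cite{P:2010:QFano}, \cite{P:2019:rat:Q-Fano} and the Graded Ring Database. Your account of how the torsion twist enters the local periodic contributions and how $\df(X)=3$ cuts down the search is a faithful, if more explicit, description of that same method.
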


\section{Proof of Theorem~\ref{thm:cbQf}}
\begin{assumptions}
\label{ass0}
Let $X$ be a $\QQ$-Fano threefold with $\qQ(X)>1$ and $\df(X)\ge 2$ and let $\MMM$ be a 
linear system without fixed components such that $\dim(\MMM)=\df(X)$ and $\uplambda(X,\MMM)>1$.
Assume that $X$ has no conic bundle structures.
\end{assumptions}

\begin{sclaim}
\label{claim:bir-map}
There exists a sequence of links of the form \eqref{diagram-main}
\begin{equation}
\label{eq-seq}
\Phi: X= X^{(1)}\xdashrightarrow{\ \Psi_1\ } X^{(2)} \xdashrightarrow{\ \Psi_2\ } \cdots \xdashrightarrow{\ \Psi_{n-1}\ } X^{(n)}=X'
\end{equation} 
where
each $X^{(i)}$ is a $\QQ$-Fano threefold, 
\[
\uplambda\left(X^{(i+1)},\MMM^{(i+1)}\right)\ge \uplambda\left(X^{(i)},\MMM^{(i)}\right).
\]
for $\MMM^{(i+1)}=\Psi_{i*}\MMM^{(i)}$, 
and the pair 
$\left(X', \MMM'=\MMM^{(n)}\right)$ is terminal.
\end{sclaim}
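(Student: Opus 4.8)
The plan is to build \eqref{eq-seq} by iterating the Sarkisov link \eqref{diagram-main} of Section~\ref{sect-constr}, exploiting that a conic bundle structure is a birational notion, so no variety birational to $X$ can carry one. Set $X^{(1)}=X$ and $\MMM^{(1)}=\MMM$. By the maximality defining $\df(X)$ the system $\MMM$ is complete (any $|M|\supseteq\MMM$ with $\dim|M|\le\df(X)=\dim\MMM$ must equal $\MMM$), so Proposition~\ref{prop:search0} applies, since $\dim\MMM=\df(X)\ge 2$ and $\uplambda(X,\MMM)>1$, and shows that $\MMM$ is not \emph{composed of a pencil}. I would carry three properties along the sequence: $\dim\MMM^{(i)}\ge 2$, $\uplambda(X^{(i)},\MMM^{(i)})>1$, and $\MMM^{(i)}$ not \emph{composed of a pencil}. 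The third is the decisive one, and it is automatic because being \emph{composed of a pencil} depends only on the image of the associated rational map and is therefore a birational invariant; the first two, together with the absence of fixed components (the proper transform of a movable system is movable), likewise survive passage to $\MMM^{(i+1)}=\Psi_{i*}\MMM^{(i)}$ under the birational map $\Psi_i$.

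For the inductive step, write $\lambda_i:=\uplambda(X^{(i)},\MMM^{(i)})$ and $c_i:=\ct(X^{(i)},\MMM^{(i)})$. If $c_i\ge\lambda_i$ then $c_i>1$, the pair $(X^{(i)},\MMM^{(i)})$ is terminal, and I stop with $X'=X^{(i)}$. Otherwise $\lambda_i>c_i$, so the Assumptions of Section~\ref{sect-constr} hold and I form the link \eqref{diagram-main}. The key point is that $\bar f$ must be birational: were it not, Lemma~\ref{lemma:MFS} (applicable since $\MMM^{(i)}$ is not \emph{composed of a pencil}) would yield a conic bundle structure on $X^{(i)}$, hence on $X$, contrary to hypothesis. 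Thus $\bar f$ is birational, $\hat X=X^{(i+1)}$ is again a $\QQ$-Fano threefold, and \eqref{eq:constr:Klambda3} gives $\lambda_{i+1}>\lambda_i$, which in particular meets the required non-strict inequality.

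The step I expect to demand the most care is termination. Since each $X^{(i)}$ is birational to $X$ it has no conic bundle structure, so the contrapositive of Theorem~\ref{thm:rat} forces $\qQ(X^{(i)})\le 7$. Writing $\MMM^{(i)}\qq m_iA_i$ with $A_i$ the ample generator of $\Cl(X^{(i)})/\Clt{X^{(i)}}$ and $m_i\in\ZZ_{>0}$, the relation $-K_{X^{(i)}}\qq\qQ(X^{(i)})A_i$ gives $\lambda_i=\qQ(X^{(i)})/m_i$, and $\lambda_i>1$ forces $1\le m_i<\qQ(X^{(i)})\le 7$. Hence every $\lambda_i$ lies in the finite set $\{a/b:1\le b<a\le 7\}$. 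As $(\lambda_i)$ is strictly increasing it must be finite, so after finitely many links the alternative $c_n\ge\lambda_n$ occurs and the terminal pair $(X',\MMM')$ is reached, proving the claim.
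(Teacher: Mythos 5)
Your proposal is correct and follows the same skeleton as the paper's proof: rule out the composed-of-a-pencil case via Proposition~\ref{prop:search0}, use the no-conic-bundle hypothesis together with Lemma~\ref{lemma:MFS} to force each contraction $\bar f$ to be birational, and drive the induction with the strict increase $\uplambda(X^{(i+1)},\MMM^{(i+1)})>\uplambda(X^{(i)},\MMM^{(i)})$ from \eqref{eq:constr:Klambda3}. The one genuinely different ingredient is termination. The paper simply invokes boundedness of $\QQ$-Fano threefolds (Kawamata) to conclude that a strictly increasing sequence of invariants must stop. You instead observe that each $X^{(i)}$, being birational to $X$, has no conic bundle structure, so Theorem~\ref{thm:rat} forces $\qQ(X^{(i)})\le 7$; writing $\uplambda(X^{(i)},\MMM^{(i)})=\qQ(X^{(i)})/m_i$ with $1\le m_i<\qQ(X^{(i)})$ then confines all the $\lambda_i$ to the finite set $\{a/b:1\le b<a\le 7\}$, and a strictly increasing sequence in a finite set terminates. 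This is sound and arguably preferable: it replaces a deep boundedness theorem by Theorem~\ref{thm:rat}, which is already a standing hypothesis of the paper's setup, and it makes explicit why the relevant invariant takes only finitely many values. Your side remarks --- that $\MMM$ is complete by the maximality in the definition of $\df(X)$ (needed to apply Proposition~\ref{prop:search0} verbatim), and that not being composed of a pencil is a birational invariant and so propagates along the sequence --- fill small gaps that the paper leaves implicit when it says ``repeat the process.''
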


\begin{proof}
If the pair $(X,\MMM)$ is terminal, we are done.
Thus we may assume that $c:=\ct(X,\MMM)\le 1$.
By Proposition~\ref{prop:search0} the linear system $\MMM$ is not composed of a pencil.
Apply the construction~\eqref{diagram-main}
to $(X,\MMM)$. 
By Lemma~\ref{lemma:MFS} the contraction $\bar f$ is birational and so
we obtain a new pair $(\hat X,\hat \MMM)$, where $\hat X$ is a $\QQ$-Fano threefold and $\hat \MMM:=\Psi_*\MMM$ is a 
linear system without fixed components such that $\uplambda(\hat X,\hat \MMM)>\uplambda(X,\MMM)$ (see \eqref{eq:constr:Klambda3}).
If the pair $(\hat X,\hat \MMM)$ is terminal, we are done. Otherwise 
we can repeat the process applying the construction~\eqref{diagram-main}
to $(\hat X,\hat \MMM)$ and continue.
We get a sequence of pairs 
\begin{equation}
\label{eq-seq1}
(X,\MMM)= (X^{(1)},\MMM^{(1)})\dashrightarrow (X^{(2)},\MMM^{(2)}) \dashrightarrow \cdots \dashrightarrow (X^{(n)},\MMM^{(n)})\dashrightarrow \cdots
\end{equation} 
Since the set of all $\QQ$-Fano threefolds is bounded \cite{Kawamata:bF}, the process terminates with a pair $(X',\MMM')=(X^{(n)},\MMM^{(n)})$ having terminal singularities. 
\end{proof}

By Corollary~\ref{lemma:DP} 
\ $X'$ is a del Pezzo threefold of degree $\dd(X')\le 2$ 
and $\MMM'=\Psi_*\MMM\subset |-\frac12 K_{X'}|$. In particular, 
$\dim(\MMM)=\dim(\MMM')\le 3$. This proves \ref{thm:cbQf}\ref{thm:cbQf:0}.

If 
$X'=X_6'\subset \PP(1^3,2,3)$ is a double Veronese cone, then $\dim(\MMM)=\dim(\MMM')=2$.
If, furthermore, 
$X'$ is smooth, then according to \cite{Grinenko:V1MFS}, any birational model of $X'$ which is a Mori fiber space non-isomorphic to $X'$ is a degree $1$ del Pezzo fibration over $\PP^1$. In particular, this implies that $X\simeq X'$.
If $X'$ is a quartic double solid, then it must be smooth (see Corollary~\ref{lemma:DP}). This proves \ref{thm:cbQf}\ref{thm:cbQf-b}.

It remains to prove the assertion of \ref{thm:cbQf}\ref{thm:cbQf-a}, i.e. 
$\Psi$ is an isomorphism in the case $\dim(\MMM)=3$. 
For this purpose we can consider the last step of \eqref{eq-seq}.
We show that $X^{(n)}\simeq X^{(n-1)}$. 

\begin{assumptions}
\label{ass}
Thus we assume that $\dim(\MMM)=3$. We put $X=X^{(n-1)}$ and $\hat X=X^{(n)}$.
Recall that $\hat X$ is a smooth quartic double solid in our case.
We are going to use all the notation of Sect.~\ref {sect-constr}.
Since $(X,\MMM)$ is not terminal, $c=\ct(X,\MMM)\le 1$.
Recall also the commutative diagram \eqref{diagram-main}:
\begin{equation}
\label{diagram-main-1}
\vcenter{\xymatrix{
&\tilde X\ar[dl]_{f} \ar@{-->}[r]^\chi & \bar X\ar[dr]!<-3em,0em>^{\bar f}&
\\
X\ar@{-->}[rrr]&&&\hat X =\hat X_6 \subset \PP(1^4,2)
}}
\end{equation}
\end{assumptions}

Denote by $\bar F$ the
$\bar{f}$-exceptional divisor and by
$\tilde F \subset \tilde X$ its proper transform.
By Lemma~\ref{lemma:Econtr}\ $\bar F\neq \bar E$. Hence $F:=f(\tilde F)$
is a prime divisor on $X$.
Write 
\[
\MMM\qq bA,\qquad F\qq dA,\qquad b,\, d\in \ZZ_{>0}. 
\]
Thus $\lambda:=\uplambda(X,\MMM)=q/b>1$. 
Clearly,
\begin{equation*}
\label{eq:QDS:discr-0}
K_{\bar X}\sim \bar f^* K_{\hat X}+ a \bar F,
\end{equation*} 
where $a$ is the discrepancy of $\bar F$. Since $\hat X$ is smooth, $a$ is a positive integer. 

\begin{sremark}
\label{sremark:discr}
\begin{itemize}
\item 
If $\bar f(\bar F)$ is a point, then by \cite{Kawakita:smooth} $\bar f$ is a weighted blowup of $\bar f(\bar F)\in \hat X$ with weights $(1,w_1,w_2)$, where $\gcd(w_1,w_2)=1$. In particular, $a=w_1+w_2\ge 2$. Moreover, $a=2$ if and only if $\bar f$ is a usual blowup. 
\item 
If $\bar f(\bar F)$ is a curve, then the contraction $\bar f$ is the usual blowup at its general point. In particular, $a=1$.
\end{itemize}
\end{sremark}

\begin{sclaim}
\label{claim:QDS:eq}
$2b=q+ a d$. In particular, $q<2b$.
\end{sclaim}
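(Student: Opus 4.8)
The plan is to write the class of $K_{\bar X}$ in two different ways on $\bar X$, equate them, and then push the resulting linear equivalence down to $X$, where $\Cl(X)/\Tors$ is generated by the single class $A$; comparing the coefficients of $A$ then yields the identity in one stroke.

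First I would record the data on $\hat X$. Since $\hat X$ is a smooth quartic double solid, it is a del Pezzo threefold of degree $\dd(\hat X)=2$, so $\qQ(\hat X)=2$ and $-K_{\hat X}\qq 2\Theta$ with $\Theta=-\tfrac12 K_{\hat X}$ the ample generator of $\Cl(\hat X)$. Because the links of \eqref{diagram-main} preserve $\dim\MMM$, we have $\dim\hat\MMM=\dim\MMM=3$, while the dimension formula for del Pezzo varieties gives $\dim|\Theta|=\dd(\hat X)+1=3$. Hence $\hat\MMM$ is the \emph{complete} half-anticanonical system $|\Theta|$, which is base point free, being the pull-back of $|\OOO_{\PP^3}(1)|$ under the double cover $\hat X\to\PP^3$.

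The key step is to exploit this base point freeness. Since $|\Theta|$ has no base points, its proper transform $\bar\MMM$ under the divisorial contraction $\bar f$ acquires no component of the exceptional divisor $\bar F$; equivalently $\bar f^*\Theta\qq\bar\MMM$, so that $\bar f^*K_{\hat X}\qq-2\bar\MMM$. Combining this with the discrepancy relation $K_{\bar X}\sim\bar f^*K_{\hat X}+a\bar F$ yields
\[
K_{\bar X}\qq-2\bar\MMM+a\bar F .
\]
Finally I would transport this relation back to $X$ along $f\comp\chi^{-1}$. As $\chi$ is an isomorphism in codimension one and $f$ contracts only $E$, the pushforward sends $\bar\MMM\mapsto\MMM$, $\bar F\mapsto F$ and $K_{\bar X}\mapsto K_X$, whence $K_X\qq-2\MMM+aF$. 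Substituting $K_X\qq-qA$, $\MMM\qq bA$ and $F\qq dA$ gives $-qA\qq(-2b+ad)A$, and since $A$ generates $\Cl(X)/\Tors$ this forces $2b=q+ad$; as $a,d>0$ we conclude $q<2b$.

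The one genuinely substantial point is the vanishing $\bar f^*\Theta\qq\bar\MMM$, i.e.\ that no multiple of $\bar F$ is absorbed into the pull-back of $\hat\MMM$; this is exactly where the smoothness of $\hat X$ enters, through base point freeness of $|-\tfrac12 K_{\hat X}|$. Everything else is bookkeeping with proper transforms and the behaviour of the canonical class under $f$ and $\chi$.
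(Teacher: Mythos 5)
Your argument is correct and is essentially the paper's own proof: the paper likewise uses $\Bs(\hat\MMM)=\varnothing$ to write $K_{\bar X}+2\bar\MMM=\bar f^*(K_{\hat X}+2\hat\MMM)+a\bar F\sim a\bar F$ and then pushes this relation down to $X$ to compare coefficients of $A$. Your extra justification that $\hat\MMM$ is the complete, base-point-free system $\left|-\tfrac12 K_{\hat X}\right|$ is exactly what the paper relies on implicitly, so there is nothing to change.
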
 
\begin{proof}
Since $\Bs(\hat\MMM)=\varnothing$, we have 
\begin{equation*}
K_{\bar X}+2\bar \MMM=\bar f^* (K_{\hat X}+2\hat \MMM) +a \bar F\sim a \bar F.
\end{equation*} 
Taking the pushforward of this relation to $X$, we get the desired equality.
\end{proof}

\begin{sclaim}
\label{claim:tors0}
$\gcd(b,d)=1$ and
$\Clt{X}$ is a cyclic group of order $e/d$.
\end{sclaim}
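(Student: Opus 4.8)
The plan is to read off $\Cl(X)$ and its torsion directly from the two divisorial contractions in \eqref{diagram-main-1}. Since $f\colon\tilde X\to X$ and $\bar f\colon\bar X\to\hat X$ are extremal divisorial contractions with single exceptional divisors $E$ and $\bar F$, while $\chi$ is an isomorphism in codimension one, the proper-transform maps give an identification $\chi_*\colon\Cl(\tilde X)\xrightarrow{\ \sim\ }\Cl(\bar X)$ together with two short exact sequences
\[
0\longrightarrow\ZZ\tilde F\longrightarrow G\stackrel{g}{\longrightarrow}\Cl(\hat X)\longrightarrow0,\qquad 0\longrightarrow\ZZ E\longrightarrow G\stackrel{h}{\longrightarrow}\Cl(X)\longrightarrow0,
\]
where $G:=\Cl(\tilde X)$, $g=\bar f_*\comp\chi_*$ and $h=f_*$. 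Because $\hat X$ is a smooth quartic double solid, $\Cl(\hat X)\cong\ZZ$ is torsion free, generated by $\Theta$ with $-K_{\hat X}\sim2\Theta$. The first sequence then shows $\Tors(G)\subset\ZZ\tilde F$, hence $\Tors(G)=0$ (here $\tilde F$ is non-torsion since $h(\tilde F)=F\qq dA$ with $d>0$); as $\Cl(\hat X)$ is free the sequence splits, so I may write $G=\ZZ\tilde F\oplus\ZZ v$ with $g(v)=\Theta$. Pushing the classes of $E$ and $\tilde\MMM$ into $\Cl(\hat X)$ and using $\hat E\qq e\Theta$ together with $\hat\MMM\subset\left|-\tfrac12K_{\hat X}\right|=|\Theta|$, I obtain $E=c_0\tilde F+e\,v$ and $[\tilde\MMM]=c_1\tilde F+v$ for some $c_0,c_1\in\ZZ$.

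Next I would pass to the torsion-free quotient. Let $\rho\colon G\to\Cl(X)/\Tors\cong\ZZ$ be $h$ followed by the quotient and by the isomorphism normalized by $A\mapsto1$. Then $\rho(\tilde F)=d$, $\rho(E)=0$, $\rho([\tilde\MMM])=b$, and, writing $p:=\rho(v)$, surjectivity of $\rho$ forces $\gcd(d,p)=1$. The relation $\rho(E)=0$ becomes $c_0d+ep=0$; since $\gcd(d,p)=1$ this yields $d\mid e$ and $\gcd(c_0,e)=e/d$. Now the second exact sequence gives $\Cl(X)=G/\ZZ E\cong\ZZ^2/\ZZ(c_0,e)$, whose torsion subgroup is cyclic of order $\gcd(c_0,e)=e/d$; this is the asserted description of $\Clt{X}$. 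Finally, from $b=\rho([\tilde\MMM])=c_1d+p$ we get $\gcd(b,d)=\gcd(p,d)=1$, completing the claim.

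The arithmetic here (the Smith normal form of $\ZZ^2/\ZZ(c_0,e)$ and the gcd manipulations) is routine. The step that requires real care, and which I expect to be the main obstacle, is the first paragraph: justifying that the proper-transform maps really produce the two short exact sequences and the identification $\chi_*$, with no extra torsion sneaking in through the flips of $\chi$ or through the non-Gorenstein points of $X$ and $\tilde X$. It is precisely the smoothness of $\hat X$ --- forcing $\Cl(\hat X)$ to be torsion free and $\hat\MMM$ to have class exactly $\Theta$ --- that pins down $\Tors(G)=0$ and concentrates all of $\Clt{X}$ in the single relation $E=c_0\tilde F+e\,v$.
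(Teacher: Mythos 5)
Your argument is correct and follows essentially the same route as the paper: both identify $\Cl(\tilde X)\cong\Cl(\bar X)\cong\ZZ^2$ via the divisorial contraction $\bar f$ (using that $\Cl(\hat X)=\ZZ\Theta$ is torsion free for the smooth quartic double solid and that $\hat\MMM\sim\Theta$), express the class of $E$ in a basis containing $[\tilde F]$, and read off $\Clt{X}$ from the quotient $\Cl(X)=\Cl(\tilde X)/\ZZ[E]$ by a Smith-normal-form/gcd computation. The only difference is cosmetic: the paper works in the basis $\{[\tilde\MMM],[\tilde F]\}$ and writes $E\sim u\tilde\MMM+v\tilde F$, while you use the unimodularly equivalent basis $\{[\tilde F],v\}$ with $g(v)=\Theta$.
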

\begin{proof}
The group $\Cl(\bar X)$ is generated by the classes of $\bar \MMM$ and $\bar E$:
\[
\Cl(\bar X)=\ZZ\cdot [\bar \MMM]\oplus \ZZ\cdot [\bar F].
\]
Hence, $\Cl(\tilde X)=\ZZ\cdot [\tilde \MMM]\oplus \ZZ\cdot [\tilde F]$ and we can write 
\begin{equation}
\label{eq:EuMvF}
E\sim u \tilde \MMM+v \tilde F.
\end{equation} 
Since $E$ is contracted by $f$, we have $\Cl(X)=\Cl(\tilde X)/\ZZ\cdot [E]$ and so 
$\Cl(X)=\ZZ\oplus \ZZ/n\ZZ$, where $n=\gcd(u,v)$.
Taking pushforward of \eqref{eq:EuMvF} to $X$ and $\hat X$ we get $ub+vd=0$ and $e=u$, respectively. Since the classes of $\MMM$ and $F$ generate $\Cl(X)$, we have $\gcd(b,d)=1$. Hence, $v=-nb$ and $e=nd$.
\end{proof}

\begin{sclaim}
\label{claim:MN-curve}
Assume that $\MMM\ni tN$, where $N$ is an (effective) Weil divisor and $t\ge 2$.
Then $e=d=1$, 
$\Cl(X)$ is torsion free, $\Supp(N)=F$, $\bar f(\bar F)$ is a point, and $a\ge 2$. 
\end{sclaim}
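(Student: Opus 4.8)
The plan is to transport the special member $tN$ through the link \eqref{diagram-main-1} to the smooth quartic double solid $\hat X=\hat X_6\subset\PP(1^4,2)$ and to exploit its rigidity: $\Cl(\hat X)=\ZZ\Theta$ is torsion free, $-K_{\hat X}\qq 2\Theta$, and, because $\dim\hat\MMM=\dim\MMM=3=\dim|\Theta|$, the system $\hat\MMM$ is the complete, base-point-free system $|{-\tfrac12 K_{\hat X}}|=|\Theta|$. The whole argument runs on the tension between the divisibility $t\ge2$ and the torsion-free rank-one group $\Cl(\hat X)$.

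First I would prove $\Supp(N)=F$ and, at the same time, that $\Cl(X)$ is torsion free with $e=d=1$. Decompose $N=N'+mF$ with $m=\operatorname{mult}_F(N)$ and $F\not\subset\Supp(N')$. Since $f$ only extracts $E$, $\chi$ is an isomorphism in codimension one, and $\bar f$ contracts only $\bar F$, the sole $\Psi$-contracted divisor is $F$; hence the member of $\bar\MMM$ lying over $tN$ equals $t\bar N'+tm\bar F+k\bar E$ for some integer $k\ge0$, and $\bar f_*$ turns it into a member $t\hat N'+k\hat E$ of $|\Theta|$. In $\Cl(\hat X)=\ZZ\Theta$ this reads $t\,c'+k\,e=1$ with $[\hat N']=c'\Theta$, $c'\ge1$ whenever $N'\neq0$; as $t\ge2$, $k\ge0$, $e\ge1$ this is impossible unless $N'=0$, so $N=mF$. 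Substituting $N=mF$ together with $\bar E\sim e\bar\MMM-nb\bar F$ (from the proof of Claim~\ref{claim:tors0}, where $n=|\Clt{X}|$ and $e=nd$) into the class identity $[\bar\MMM]=tm[\bar F]+k[\bar E]$ and comparing coordinates in $\Cl(\bar X)=\ZZ[\bar\MMM]\oplus\ZZ[\bar F]$ yields $ke=1$ and $tm=knb$, hence $k=e=1$, $n=d=1$, $\Cl(X)$ torsion free, and $b=tm$.

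It remains to show $a\ge2$, which by Remark~\ref{sremark:discr} is exactly the statement that $\bar f(\bar F)$ is a point; so I must exclude $a=1$. If $a=1$, then $\bar f$ blows up a smooth curve $\hat C=\bar f(\bar F)$; base-point-freeness of $|\Theta|$ forces $\bar\MMM=\bar f^*\Theta$, and combined with $\bar E\sim\bar\MMM-b\bar F$ and $\hat E\sim\Theta$ this gives $\operatorname{mult}_{\hat C}\hat E=b=tm$. But on $\hat X_6\subset\PP(1^4,2)$ a member of $|\Theta|$ is cut by a linear form in the weight-one variables (Theorem~\ref{thm:DP:cla}), so $\hat E\cong\{y^2=\psi_4\}$ with defining equation quadratic in $y$; its multiplicity along any curve is at most $2$. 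Thus $tm\le2$, and with $t\ge2$, $m\ge1$ we get $t=2$, $m=1$, $b=2$, whence $1<\lambda=q/b<2$ forces $\qQ(X)=3$ and $\operatorname{mult}_{\hat C}\hat E=2$, i.e.\ $\hat E$ is non-normal along $\hat C$.

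The hard part is eliminating this residual configuration ($a=1$, $\qQ(X)=3$, $\MMM=|2A|\ni2F$, $F\qq A$, $\hat E$ singular along the centre $\hat C$): it survives all the numerical relations at hand, since Claim~\ref{claim:QDS:eq}, the log-crepancy identity $\alpha=c\beta$, and $\delta=a/b$ remain mutually consistent here for several possible indices of $E$, so it cannot be killed by bookkeeping alone. I would close it by a local study of the non-normal double cover $\hat E$ along $\hat C$, showing that the irreducible strict transform $\bar E$ of $\hat E$ cannot simultaneously be the divisor produced by $f$ and keep $\bar X$ terminal and $\QQ$-factorial; alternatively, by invoking the classification behind Proposition~\ref{prop:search1} to rule out an index-$3$ $\QQ$-Fano threefold with $\Cl(X)\simeq\ZZ$, $\df(X)=3$ carrying $\MMM=|2A|$ with a reducible member $2F$ and $\ct(X,\MMM)\le1$. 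Steps A and B are purely formal consequences of $t\ge2$ and the torsion-free rank-one class group of $\hat X$; this residual elimination is where the genuine work lies.
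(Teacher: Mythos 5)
Your first step ($\Supp(N)=F$, $\delta=e=1$, hence $d=1$ and $\Cl(X)$ torsion free) is correct and is essentially the paper's own argument: push the relation $\tilde\MMM\sim t\tilde N+\delta E$ forward to $\hat X$ and use that the class of $\hat\MMM$ is the indivisible generator $\Theta$ of $\Cl(\hat X)\simeq\ZZ$. Likewise, your observation that $\bar\MMM=\bar f^*\hat\MMM$ and $\bar E\sim\bar f^*\hat E-tm\bar F$, so that $\hat E$ has multiplicity $tm\ge 2$ along $\bar f(\bar F)$, is exactly the computation the paper performs.

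The genuine gap is in excluding the case where $\bar f(\bar F)$ is a curve. You correctly reduce to the configuration $a=1$, $t=2$, $m=1$, with $\hat E$ a \emph{prime} member of $|\Theta|=|-\tfrac12 K_{\hat X}|$ that is singular along the curve $\hat C=\bar f(\bar F)$, and you correctly diagnose that the numerical relations alone will not eliminate it --- but you then leave it unresolved, offering only two unexecuted strategies (a local analysis of the non-normal double cover, or the classification behind Proposition~\ref{prop:search1}, which concerns $X$ rather than $\hat X$ and has no evident purchase on this configuration). The paper closes this case in one line with a geometric input you are missing: such an $\hat E$ would be an irreducible Gorenstein del Pezzo surface of degree $2$ (with ample Cartier anticanonical class cut out on the smooth quartic double solid) that is non-normal, i.e.\ singular along a curve, and this contradicts the classification of non-normal del Pezzo surfaces in \cite{Furushima-Tada:nnDP}. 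Without this (or an equivalent) input, the inequality $a\ge 2$ --- and hence the claim --- is not established, so the proof as written is incomplete.
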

\begin{proof}
We can write $\tilde \MMM\sim t\tilde N+\delta E$, where $\delta$ is an non-negative integer.
Hence, $\hat \MMM\sim t\bar f_*\bar N+\delta\hat E$.
Since the class of $\hat \MMM$ is not divisible in $\Cl(\hat X)$,
we have $\Supp(\bar N)=\bar F$ and so $\delta=1$ and $\hat E\sim \MMM$, i.e. $e=1$.
By Claim~\ref{claim:tors0} $d=1$ and
$\Cl(X)$ is torsion free.
Further,
\[
\bar E\sim \bar \MMM-t\bar F\sim \bar f^*\hat E-t\bar F,
\]
where $t\ge 2$. If $\bar f(\bar F)$ is a curve, then $\hat E$ must be singular 
along $\bar f(\bar F)$. This contradicts \cite{Furushima-Tada:nnDP}. Hence $\bar f(\bar F)$ is a point and $a\ge 2$ by Remark~\ref{sremark:discr}.
\end{proof}

\begin{sclaim}
\label{claim:MN-new}
Assume that $\MMM\ni N_1+N_2$, where $N_1$ and $N_2$ are effective non-zero Weil divisors without common components. If $F\neq \Supp(N_1)$, then $F=\Supp(N_2)$
and $N_1$ is a prime divisor.
\end{sclaim}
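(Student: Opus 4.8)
The plan is to push the reducible member $N_1+N_2$ forward to $\hat X$ and exploit that $\hat X$ is a smooth quartic double solid, so its class group is torsion free of rank one. Recall that $\Cl(\hat X)=\ZZ\cdot H$ with $H=-\frac12 K_{\hat X}$ the ample generator, and $\hat\MMM\subset\left|-\frac12 K_{\hat X}\right|$, i.e. $\hat\MMM\sim H$. Consequently every non-zero effective divisor whose class is $H$ is prime, while every effective divisor whose class is $0$ vanishes. I will also use that $F$ is the only prime divisor on $X$ contracted by $\Psi=\bar f\comp\chi\comp f^{-1}$: its proper transform is $\tilde F$, which $\chi$ carries to $\bar F$, and $\bar F$ is $\bar f$-exceptional (with $\bar F\neq\bar E$ by Lemma~\ref{lemma:Econtr}); every prime divisor different from $F$ has a well-defined non-zero prime proper transform on $\hat X$, and distinct such primes have distinct transforms.

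First I would record the $F$-part of each $N_i$. Write $N_i=N_i'+m_iF$ with $m_i\ge0$ and $N_i'$ having no component equal to $F$. Since $N_1$ and $N_2$ share no component and $F$ is irreducible, at most one $m_i$ is positive. The proper transform $f^{-1}_*(N_1+N_2)$ of the member $N_1+N_2\in\MMM$ is a member of $\tilde\MMM=f^{-1}_*\MMM$, hence linearly equivalent to a general member of $\tilde\MMM$; this is the step that avoids any bookkeeping of multiplicities along $E$. Applying $\chi_*$ (an isomorphism in codimension one) and then $\bar f_*$, and using $\Psi_*F=0$, I obtain on $\hat X$
\[
\widehat{N_1'}+\widehat{N_2'}=\Psi_*(N_1+N_2)\sim \hat\MMM\sim H,
\]
where $\widehat{N_i'}=\Psi_*N_i'$ are effective. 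Since $\Cl(\hat X)=\ZZ H$, their classes are $c_1H$ and $c_2H$ with $c_i\ge0$ and $c_1+c_2=1$; thus exactly one of $\widehat{N_1'},\widehat{N_2'}$ is zero.

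It remains to identify which one and to read off the conclusion. If $\widehat{N_1'}=0$ then $N_1'=0$, so $N_1=m_1F$ and $\Supp(N_1)=F$, contradicting the hypothesis $F\neq\Supp(N_1)$. Hence $\widehat{N_2'}=0$, so $N_2'=0$, $N_2=m_2F$ with $m_2\ge1$, and $\Supp(N_2)=F$. Then $\widehat{N_1'}\sim H$ is prime. Because $N_2$ is supported on $F$ and $N_1$ shares no component with it, $N_1$ has no $F$-component, so $N_1=N_1'$; writing $N_1'=\sum a_jD_j$ with the $D_j\neq F$ distinct primes, its transform $\sum a_j\widehat{D_j}$ is a sum of distinct primes, and its primality forces a single $D_1$ with $a_1=1$. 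Therefore $N_1$ is a prime divisor, as claimed. The one delicate point is the linear-equivalence bookkeeping in the displayed relation; phrasing it through ``the proper transform of a member of $\MMM$ is a member of $\tilde\MMM$'' sidesteps the correction terms along $E$ and $\hat E$, which is exactly what makes the rank-one argument on $\hat X$ clean.
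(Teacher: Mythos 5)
Your overall strategy is the paper's: push the reducible member to $\hat X$, use that $\Cl(\hat X)$ is generated by the class of $\hat\MMM$ and that the only prime divisor killed by $\Psi$ is $F$, and read off which piece must die; the primality argument at the end is also fine. However, the step you yourself single out as ``the one delicate point'' is handled incorrectly. It is not true that the proper transform $f^{-1}_*(N_1+N_2)$ of a particular member of $\MMM$ is a member of $\tilde\MMM=f^{-1}_*\MMM$ and hence linearly equivalent to a general member. The system $\tilde\MMM$ satisfies $\tilde\MMM\qq f^*\MMM-\beta E$, where $\beta$ is the multiplicity of a \emph{general} member along the center of $f$; a special member such as $N_1+N_2$ may have strictly larger multiplicity $\beta+\delta$ there, and then its proper transform satisfies $\tilde N_1+\tilde N_2\sim\tilde\MMM-\delta E$ with $\delta\ge 0$ possibly positive. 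This is exactly the bookkeeping you claim to avoid; it cannot be avoided, only shown a posteriori to be harmless. Your displayed relation should therefore read
\[
\widehat{N_1'}+\widehat{N_2'}+\delta\hat E\sim\hat\MMM,\qquad \delta\ge 0 .
\]

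The repair is short. By Lemma~\ref{lemma:Econtr} the divisor $\hat E$ is a nonzero effective divisor, so the left-hand side is a sum of three effective divisors whose classes are non-negative multiples of the generator; since $\widehat{N_1'}\neq 0$ (because $F\neq\Supp(N_1)$) and the class of $\hat\MMM$ is not divisible, indivisibility forces $\widehat{N_1'}\sim\hat\MMM$, $\widehat{N_2'}=0$, and $\delta=0$ all at once, after which your identification of $\Supp(N_2)=F$ and the primality of $N_1$ go through unchanged. This is precisely what the paper's proof does: it writes $\tilde\MMM\sim\tilde N_1+\tilde N_2+\delta E$ with $\delta\ge 0$ and concludes $\delta=0$, $\bar f_*\bar N_2=0$ from the indivisibility of the class of $\hat\MMM$. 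So the gap is a single false assertion, localized and easily fixed, but as written the justification of the key linear equivalence is not valid.
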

\begin{proof}
As above, write $\tilde \MMM\sim \tilde N_1+\tilde N_2+\delta E$, where $\delta\ge 0$.
Hence, $\hat \MMM\sim \bar f_*\bar N_1+\bar f_*\bar N_2+\delta\hat E$.
By our assumption $f_*\bar N_1\neq 0$.
Since the class of $\hat \MMM$ is not divisible in $\Cl(\hat X)$,
we have $\delta=0$, $\bar f_*\bar N_2=0$, and $\bar N_1$ is a prime divisor.
\end{proof}

\begin{sclaim}
\label{claim:subsystem}
If $a>1$, then $\MMM\sim N+\delta F$, where $\delta$ is a positive integer and 
$N$ is an effective Weil divisor such that $\dim |N|\ge 2$.
\end{sclaim}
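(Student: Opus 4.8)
The plan is to produce the required decomposition geometrically on the smooth model $\hat X$ and then transport it back to $X$. Since $a>1$, Remark~\ref{sremark:discr} tells us that $\bar f(\bar F)$ is a \emph{point}, say $\hat P\in\hat X$, and that $\bar f$ is a weighted blowup of $\hat P$ with weights $(1,w_1,w_2)$. Recall from Corollary~\ref{lemma:DP} and Assumptions~\ref{ass} that $\hat X$ is a smooth quartic double solid and $\hat\MMM\subset|-\tfrac12 K_{\hat X}|$; since $\dim\hat\MMM=\dim\MMM=3=\dim|-\tfrac12 K_{\hat X}|$, the system $\hat\MMM$ is the \emph{complete} half-anticanonical system $|\hat A|$ with $\hat A:=-\tfrac12 K_{\hat X}$, which by Theorem~\ref{thm:DP:cla} is base point free. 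First I would introduce the sub-linear system $\HHH\subset\hat\MMM$ consisting of those members passing through $\hat P$. Because $\hat\MMM$ is base point free of dimension $3$, passing through a single point is one nontrivial linear condition, so $\dim\HHH=2$.

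Next I would transport $\HHH$ back along the link. A member of $\HHH$ vanishes at $\hat P$, so its local equation has no constant term and its $\bar F$-order under the weighted blowup is at least $\min(1,w_1,w_2)=1$. Hence the proper transform on $\bar X$ of a general member of $\HHH$ lies in $|\bar f^*\hat A-\delta\bar F|$ for a positive integer $\delta\ge 1$ (the generic $\bar F$-order of members of $\HHH$), and its class is $\bar\MMM-\delta\bar F$, since a general member of $\hat\MMM$ misses $\hat P$ and therefore $\bar\MMM\sim\bar f^*\hat A$. Pulling back through $\chi$, which is an isomorphism in codimension one identifying $\bar\MMM$ with $\tilde\MMM$ and $\bar F$ with $\tilde F$, and then applying $f_*$, which sends $\tilde\MMM\mapsto\MMM$, $\tilde F\mapsto F$ and kills $E$, I obtain an effective divisor $N$ with $N\sim\MMM-\delta F$ together with a sub-linear system of $|N|$ of dimension $\dim\HHH=2$. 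This yields $\MMM\sim N+\delta F$ with $\delta>0$ and $\dim|N|\ge 2$, as required.

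The main obstacle is bookkeeping rather than geometry: I must check that $\delta$ is a genuine positive integer (guaranteed by reading it as the weighted order along $\bar F$), and that the dimension of the system is preserved through the whole chain $\hat X\dashrightarrow\bar X\dashrightarrow\tilde X\to X$. For the latter the point is that the general members involved are proper transforms of members of the movable system $\hat\MMM$, hence are never $f$-exceptional, so $f_*$ is injective on them and their class can only change by a multiple of $E$, which $f_*$ annihilates; thus $f_*$ preserves both effectivity and the linear-system dimension. I expect the precise tracking of the class through $\chi$ and $f$ (modulo $E$) to be the only fiddly point, and it is routine given $\Cl(\bar X)=\ZZ[\bar\MMM]\oplus\ZZ[\bar F]$ and the pushforward relations $f_*\tilde\MMM=\MMM$, $f_*\tilde F=F$.
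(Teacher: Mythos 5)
Your argument is correct and follows essentially the same route as the paper: you take the subsystem $\hat\MMM_P\subset\hat\MMM$ of members through the point $\hat P=\bar f(\bar F)$ (which has dimension $2$ since $\hat\MMM$ is base point free of dimension $3$), observe that its proper transform acquires $-\delta\bar F$ with $\delta\ge 1$, and push forward to get $\MMM\sim N+\delta F$ with $\dim|N|\ge 2$. The only difference is that you spell out the base-point-freeness justification and the bookkeeping through $\chi$ and $f_*$, which the paper leaves implicit.
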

\begin{proof}
By Remark~\ref{sremark:discr}\ $\bar f(\bar F)$ is a point, say $\hat P$.
Let $\hat \MMM_P\subset \hat \MMM$ be the linear subsystem consisting of all divisors from $\hat \MMM$ passing through $\hat P$. Then $\dim (\hat \MMM_P)=2$
and 
\[
\bar \MMM_P\sim \bar f^*\hat \MMM_P-\delta \bar F\sim f^*\hat \MMM-\delta \bar F,
\]
where $\delta\ge 1$. Therefore, $\MMM\sim \MMM_P+\delta F$, where $\dim (\MMM_P)=2$.
\end{proof}

Now we are in position to prove Theorem~\ref{thm:cbQf}\ref{thm:cbQf-a}.
We show that in our assumptions \ref{ass} the diagram \ref{diagram-main-1}
does not exist. Consider possibilities for $\hat X$ case by case.

\subsection{Case $\Cl(X)\simeq \ZZ$}
\label{case:Cl=Z}
Then $e=d$ by Claim ~\ref{claim:tors0}.
Apply Proposition \ref{prop:search1}.
We obtain the following subcases.

\subsubsection*{Subcase $\qQ(X)=7$}
Then by \ref{prop:search1}\ref{prop:search1:q=7} there are prime divisors $M_2$ and $M_3$ 
such that $\MMM\sim 3M_2\sim 2M_3$. But then by Claim~\ref{claim:MN-curve}
we have $M_2=M_3=F$, a contradiction.

\subsubsection*{Subcase $\qQ(X)=5$}
Then $\dim |2A|=0$ and $\dim |3A|= 1$ by \ref{prop:search1}\ref{prop:search1:q=5}.
Hence, $b=4$ and $\MMM\ni 2 D$, where $D\in |2A|$. By Claim~\ref{claim:MN-curve} we have $\Supp(D)=F$, $d=1$ and $a\ge 2$. Then by Claim~\ref{claim:subsystem} we have 
$\dim |kA|\ge 2$, where $k\le 3$, a contradiction. 

\subsubsection*{Subcase $\qQ(X)=4$.}
It follows from Claim~\ref{claim:QDS:eq} that $b=3$ and $ad=2$. 
Thus $\MMM\sim 3A$ and $s_3=1$.
According 
to Proposition \ref{prop:search1}\ref{prop:search1:q=4} we have $\dim |A|\le 0$ and $\dim |2A|\ge 1$.
The relation \eqref{equation-main} for $k=3$ has the form
\begin{equation*}
2=(4\beta_3-3 \alpha) e,
\end{equation*}
where $\beta_3\ge 2\alpha$ by Lemma~\ref{lemma-cthreshold}.
Hence $\alpha\le 2/(5e)<1$ and so $P:=f(E)$ is a non-Gorenstein point of $X$.
Let $r$ be its index. 
Similarly, the relation \eqref{equation-main} for $k=2$ has the form
\begin{equation*}
(\alpha-2\beta_2)e=2(s_2-1).
\end{equation*}
Since $\dim |2A|>0$, a general member of $\MMM_2=|2A|$ is not contracted on $\hat X$, hence $s_2\ge 1$. Therefore, $\beta_2\le \alpha/2$.
Since $r\MMM_2$ is Cartier at $P$, $r\beta_2$ is an integer. 
So, $\alpha\ge 2/r$. Hence $f$ cannot be a Kawamata blowup of $P\in X$ 
and $P\in X$ is not a cyclic quotient singularity
(see Theorem~\ref{theorem-Kawamata-blowup}).
By Proposition \ref{prop:search1}\ref{prop:search1:q=6} we have $r\ge 7$. 
Then 
\[
\frac 1{5e} \ge \frac{\alpha}2 \ge \beta_2 \ge \frac17, \quad e=1. 
\]
By Claim \ref{claim:tors0}\ $d=1$, $a=2$, and $\dim |2A|\ge 2$ by Claim \ref{claim:subsystem}.
Then by \ref{prop:search1}\ref{prop:search1:q=4} we have only one possibility: $A^3=2/11$ and $\B(X)=(11)$.
But this means that 
$P\in X$ is a cyclic quotient singularity, a contradiction.

\subsubsection*{Subcase $\qQ(X)=3$}
Then $b=2$ and $d=a=1$ by Claim~\ref{claim:QDS:eq}. Hence, $\MMM\sim 2F$
because $\Cl(X)\simeq \ZZ$. Then 
we get a contradiction by Claim~\ref{claim:MN-curve}.

\subsection{Case: $\Clt{X}\neq 0$}
Let $n$ be the order of $\Clt{X}$.
Apply Proposition \ref{prop:search2} and consider possibilities for $\qQ(X)$.

\subsubsection*{Subcase $\qQ(X)=5$.}
Then $n=2$.
By 
\ref{prop:search2}\ref{prop:search2:q=5} we have 
\[
\dim |kA|\le 1\quad\text{and} \quad \dim |kA+T|\le 1\quad \text{for all $k\le 3$.}
\]
In particular, $b=4$. If $a>1$, then by Claim~\ref{claim:subsystem} we get a contradiction.
Thus $a=1$ and $d=3$ (see Claim~\ref{claim:QDS:eq}). 
If $\MMM=|4A|$, then we get a contradiction by Claim~\ref{claim:MN-curve}
because $|A|\neq \varnothing$ (see \ref{prop:search2}\ref{prop:search2:q=5}). Let $\MMM=|4A+T|$. Then $\MMM\sim 2N_1+N_2$,
where $N_1\in |A|$ and $N_2\in |2A+T|$ are prime divisors.
By Claim~\ref{claim:MN-new} we have $F=N_1$ and so $d=1$, a contradiction. 

\subsubsection*{Subcase $\qQ(X)=4$} 
Then $n=2$ or $5$.
It follows from Claim~\ref{claim:QDS:eq} that $b=3$ and $ad=2$. 
If $a>1$, we get a contradiction by Claim~\ref{claim:subsystem} and \ref{prop:search2}\ref{prop:search2:q=4}. Thus, $a=1$ and $d=2$.
By \ref{prop:search2}\ref{prop:search2:q=4} $|A+T|\neq\varnothing$ and $\dim |3A+3T|=3$.
Take $\MMM= |3A+3T|$ and apply the construction \eqref{diagram-main}. 
Then by Claim~\ref{claim:MN-curve} $\hat X$ is not a smooth quartic double solid
because $\MMM\ni 3N$, $N\in |A+T|$.
By \eqref{eq:constr:Klambda3} we have $\uplambda(\hat X, \hat \MMM)> \uplambda (X,\MMM)=4/3$. Therefore, $\qQ(\hat X)> 4$.
Hence we can proceed with the sequence of Sarkisov links \eqref{eq-seq}
so that $\uplambda(X^{(i)}, \MMM^{(i)})>4/3$ and $\qQ(X^{(i)})> 4$.
By the above considered cases we get a contradiction.

\subsubsection*{Subcase $\qQ(X)=3$.}
It follows from Claim~\ref{claim:QDS:eq} that $b=2$, $a=d=1$. 
Hence, $\MMM\qq 2F$.
If $\MMM\sim 2F$, we get a contradiction by Claim~\ref{claim:MN-curve}.
Therefore, $T:=\MMM-2F$ is a non-trivial torsion element. We have $K_{\bar X}+2\bar \MMM-\bar F\sim 0$. Hence $-K_X\sim 2\MMM-F\sim 3F+2T$.
Moreover, the group $\Cl(X)$ is generated by the classes of $F$ and $T$.
By Claim ~\ref{claim:tors0} $\hat E\not\sim \hat \MMM$ because $d=1$.

Assume that $3T\sim 0$. Then $\MMM\sim 2(F+2T)$ and $|F+2T|=\varnothing$
by Claim \ref{claim:MN-curve}. This contradicts~\ref{prop:search2}\ref{prop:search2:q=3}.
Therefore, $2T\sim 0$. Hence, $-K_X\sim 3F$.
If $\dim |2F|=3$, then we take $\MMM=|2F|$ and apply the construction \eqref{diagram-main}. 
Then by Claim~\ref{claim:MN-curve} $\hat X$ is not a smooth quartic double solid.
By \eqref{eq:constr:Klambda3} we have $\uplambda(\hat X, \hat \MMM)> \uplambda (X,\MMM)=3/2$. Therefore, $\qQ(\hat X)> 3$.
Hence we can proceed with the sequence of Sarkisov links \eqref{eq-seq}
so that $\uplambda(X^{(i)}, \MMM^{(i)})>3/2$ and $\qQ(X^{(i)})> 3$.
By the above considered cases we get a contradiction.

Therefore, $\dim |2F|<3$.
Thus by \ref{prop:search2}\ref{prop:search2:q=3} we have $\B=(2,4,14)$.
Then $|F+T|\neq \varnothing$ and for $D\in |F+T|$ we have
$-K_X\sim \MMM+D$,
\[
K_{\tilde X}+\tilde \MMM +\tilde D+\gamma E=f^*(K_X+\MMM+D)\sim 0.
\]
where $\gamma\ge \beta-\alpha=\alpha(c-1) \ge 0$.
Since $D\neq F$, taking pushforward to $\hat X$ we obtain $-K_{\hat X}\sim \hat \MMM +\hat D+\gamma \hat E$, where $\hat D\neq 0$. This is possible only if $\gamma =0$.
Then the pair $(X,\MMM)$ is canonical.
In this case by Lemma \ref{lemma:sing}\ref{lemma:sing:c} near each singular point either $D$ or $\MMM$ is Cartier. Since $-4K_X\sim 12D\sim 6\MMM$, the divisor $4K_X$ is Cartier. This contradicts $\B=(2,4,14)$.

\section{Applications}
In this section we consider applications of Theorem~\ref{thm:cbQf} to the existence of conic bundle structures on rationally connected threefolds. 

\begin{proposition}
\label{prop:cb}
Let $X$ be a normal projective threefold and let $\MMM$ be a non-empty linear system of Weil divisors on $X$ such 
that 
\begin{enumerate}
\item\label{prop:cb2}
$\MMM$ is not composed of a pencil, 
\item\label{prop:cb3}
$-(K_X+\MMM)\qq \Theta$, where $\Theta$ is a $\QQ$-divisor which is $\QQ$-Cartier, and
\item\label{prop:cb4}
either $\Theta>0$ or $\Theta=0$ and the pair $(X,\, \MMM)$ is not canonical.
\end{enumerate}
Assume that the variety $X$ has no conic bundle structures.
Then 
there exists a birational transformation $\Psi:X \dashrightarrow X'$, where
$X'$ is del Pezzo threefold as in \xref{thm:cbQf}\ref{thm:cbQf-b}.
Moreover, 
\[
\dim(\MMM)\le \dd(X')+1.
\]
\end{proposition}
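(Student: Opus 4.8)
The plan is to run a minimal model program carrying $(X,\MMM)$ to a $\QQ$-Fano threefold to which Corollary~\ref{lemma:DP} applies, and then to extract the dimension bound from the del~Pezzo formula $\dim|-\tfrac12K|=\dd+1$ of \S2. Throughout we use that a conic bundle structure is a birational notion, so that \emph{every} threefold birational to $X$ again has no conic bundle structure, and that a rational threefold has many conic bundle structures. First I would make two harmless reductions. We may assume $\MMM$ is movable: if $D$ is its fixed part and $\MMM_0:=\MMM-D$ its mobile part, then $\MMM_0$ defines the same rational map (so (i) persists), $\dim\MMM_0=\dim\MMM$, and (ii)--(iii) hold for $\MMM_0$ with $\Theta$ replaced by $\Theta+D\ge0$. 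Since $K_X+\MMM\qq-\Theta$ is $\QQ$-Cartier, I would then apply Corollary~\ref{cor:terminal-model} with $\mu=1$ to obtain a terminal $\QQ$-factorial model $f\colon(\tilde X,\tilde\MMM)\to(X,\MMM)$, with
\[
K_{\tilde X}+\tilde\MMM\qq f^*(K_X+\MMM)-\textstyle\sum e_iE_i\qq -f^*\Theta-\sum e_iE_i,\qquad e_i\ge 0 .
\]

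Next I would verify that $K_{\tilde X}+\tilde\MMM$ is not pseudo-effective, so that Corollary~\ref{cor:MMP} applies to the (terminal, hence canonical) pair $(\tilde X,\tilde\MMM)$. Intersecting $-(K_{\tilde X}+\tilde\MMM)\qq f^*\Theta+\sum e_iE_i$ with $f^*H^2$ for an ample $H$ on $X$ yields $\Theta\cdot H^2$: if $\Theta>0$ this is strictly positive, so $K_{\tilde X}+\tilde\MMM$ is not pseudo-effective; if instead $\Theta=0$ and $(X,\MMM)$ is not canonical, then some $e_i>0$, and $-\sum e_iE_i$ fails to be pseudo-effective because a nonzero effective $f$-exceptional divisor is never numerically trivial. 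Running the $K_{\tilde X}+\tilde\MMM$-MMP then ends with a Mori fibre space $g\colon X^*\to Z$ on which $\Theta^*\qq-(K_{X^*}+\MMM^*)$ is $g$-ample; by the preservation theorem of \S2 the pair $(X^*,\MMM^*)$ is still terminal and $X^*$ is $\QQ$-factorial terminal.

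I would then rule out $\dim Z>0$. If $\dim Z=2$, then $g$ is a $\QQ$-conic bundle, giving $X$ a conic bundle structure, a contradiction. If $\dim Z=1$, then $\MMM^*$ (still not composed of a pencil, by birational invariance of the associated map) cannot be $g$-vertical, since a $g$-vertical system is composed of the pencil of fibres; hence $\MMM^*$ restricts to a nonzero effective divisor on a general fibre $F$. Restricting $-K_{X^*}\qq\MMM^*+\Theta^*$ to $F$ and using that $\uprho(F)=1$ and $\Theta^*|_F$ is ample, I find that $-K_F$ has Fano index $\ge2$, so $F$ is rational; as in Lemma~\ref{lemma:MFS} this forces $X^*$ to be rational, again a contradiction. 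Therefore $\dim Z=0$: the variety $X^*$ is a $\QQ$-Fano threefold and $\Theta^*=-(K_{X^*}+\MMM^*)$ is ample and nonzero, so $\uplambda(X^*,\MMM^*)>1$.

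Finally, $(X^*,\MMM^*)$ satisfies all the hypotheses of Corollary~\ref{lemma:DP} --- a terminal movable system with $\uplambda>1$ on a $\QQ$-Fano threefold without conic bundle structures --- so $X':=X^*$ is a double Veronese cone (possibly singular) or a smooth quartic double solid, i.e. one of the varieties listed in \ref{thm:cbQf}\,\ref{thm:cbQf-b}, and $\Psi\colon X\dashrightarrow X'$ is the resulting birational map. As in the proof of Theorem~\ref{thm:cbQf}, $\uplambda(X',\MMM')>1$ together with $-K_{X'}=2A$ forces $\MMM'\subset|-\tfrac12K_{X'}|$, whence
\[
\dim\MMM=\dim\MMM'\le\dim\bigl|-\tfrac12K_{X'}\bigr|=\dd(X')+1 .
\]
The main obstacle is the case $\dim Z=1$: turning the index-$\ge2$ computation on the general fibre into rationality of $X^*$ (via rationality of del~Pezzo fibrations with such fibres, e.g.\ using that $\Bbbk(Z)$ is a $C_1$-field) is the only step requiring genuine geometry rather than MMP bookkeeping.
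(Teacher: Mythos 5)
Your proposal follows essentially the same route as the paper: absorb the fixed part into $\Theta$, pass to a terminal $\QQ$-factorial model, run the $K+\MMM$-MMP to a Mori fibre space, kill the cases $\dim Z=2$ and $\dim Z=1$ using the absence of conic bundle structures, and then invoke the del Pezzo classification. Two of your deviations are fine and even mildly clarifying: you spell out why $K_{\tilde X}+\tilde\MMM$ is not pseudo-effective (the paper only remarks that $\tilde\Xi$ cannot be contracted), and you apply Corollary~\ref{lemma:DP} directly to $(X^{*},\MMM^{*})$ instead of citing Theorem~\ref{thm:cbQf}; this is legitimate because terminality of the pair is preserved along the MMP by the theorem following Lemma~\ref{lemma:sing}, and it even gives the slightly sharper conclusion that no further Sarkisov links are needed after the MMP.

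There is, however, one genuinely wrong step: in the case $\dim Z=1$ you assert that the general fibre $F$ of the del Pezzo fibration has $\uprho(F)=1$ and derive the index bound from that. This is false --- relative Picard number one means $\uprho(X^{*})-\uprho(Z)=1$, not $\uprho(F)=1$; a cubic surface fibration already has $\uprho(F)=7$. The correct argument, which is the one the paper (and Lemma~\ref{lemma:MFS}) uses, is: since $\MMM^{*}$ is effective and not vertical and $\uprho(X^{*}/Z)=1$, the restriction $\MMM^{*}|_F$ is a nonzero \emph{ample integral Cartier} divisor, and $\Xi^{*}|_F$ is ample; writing $-K_F=\MMM^{*}|_F+\Xi^{*}|_F$ and intersecting with any $(-1)$-curve $C$ gives $1=-K_F\cdot C\ge 1+\Xi^{*}\cdot C>1$, a contradiction, so $F$ has no $(-1)$-curves and is $\PP^2$ or $\PP^1\times\PP^1$. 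Your subsequent $C_1$-field argument for rationality of $X^{*}$ is then exactly what is needed. With that one step repaired the proof is complete and matches the paper's.
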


\begin{proof}
We may assume that $\MMM$ has no fixed components.
Let 
\[
f:(\tilde X,\tilde \MMM)\longrightarrow (X,\MMM)
\]
be a terminal $\QQ$-factorial 
model of the pair $(X,\MMM)$ (see Corollary~\ref{cor:terminal-model}). Thus $\tilde \MMM:=f^{-1}_*\MMM$, the pair 
$(\tilde X,\tilde \MMM)$ is terminal, $K_{\tilde X}+\tilde \MMM$ 
is $f$-nef, and the variety $\tilde X$ is $\QQ$-factorial. We can write 
\[
K_{\tilde X}+\tilde \MMM\qq f^*(K_X+\MMM)-E',
\]
where $E'$ is the exceptional $\QQ$-divisor and $E'\ge 0$.
Furthermore,
\[
f^*\Theta=\tilde \Theta+ E'',
\]
where $\tilde \Theta$ is the proper transform of $\Theta$ on $\tilde X$
and $E''\ge 0$.
Hence,
\[
K_{\tilde X}+\tilde \MMM +\tilde \Xi\qq f^*(K_X+\MMM+\Theta)\qq 0,
\]
where $\tilde \Xi:=\tilde \Theta+E'+E''\ge 0$. Moreover, $\tilde \Xi>0$ by our assumptions \ref{prop:cb}\ref{prop:cb4}.
In particular, $K_{\tilde X}+\tilde \MMM$ is not nef. 
Run $(K_{\tilde X}+\tilde \MMM)$-MMP. The divisor $\tilde\Xi$ cannot be contracted, so at the end we get a Mori fiber 
space $(\bar X,\bar \MMM)/Z$:
\[
\xymatrix@R=1em@C=5em{
(\tilde X,\tilde \MMM)\ar@{-->}[r] & (\bar X,\bar 
\MMM)\ar[d]^{\varphi}
\\
&Z
}
\]
where $\bar\Xi\qq-(K_{\bar X}+\bar \MMM)$ if $\varphi$-ample.
If $\dim(Z)=2$, then $\varphi$ is a $\QQ$-conic bundle and then we are done.
Assume that $\dim(Z)=1$. Then $\varphi$ is a del Pezzo fibration and $Z\simeq 
\PP^1$.
Since $\dim(\MMM)\ge 2$ and a general member of $\MMM$ is irreducible, $\MMM$ 
is not $\varphi$-horizontal, i.e. $\MMM$ is not a pull-back of a linear system 
on $Z$. 
For a general fiber $F$ by the adjunction formula we have 
\[
K_F=K_{\bar X}|_F=\bar \MMM|_F+\bar\Xi|_F, 
\]
where both $\bar \MMM|_F$ and $\bar\Xi|_F$ are ample. Since $\bar \MMM|_F$ is an (integral) Cartier divisor,
the surface $F$ is either $\PP^2$ or $\PP^1\times \PP^1$. In this case,
$\bar X$ is rational and so it has a lot of conic bundle structures. 
Finally, assume that $Z$ is a point. Then $\bar X$ is a $\QQ$-Fano threefold 
such that $\df(\bar X)\ge 2$ and
\[
K_{\bar X}+\bar \MMM +\bar\Xi\qq 0,
\]
where both $\bar \MMM$ and $\bar\Xi$ are ample, and $\dim (\bar \MMM)\ge 2$. 
Thus we can apply Theorem~\ref{thm:cbQf} to $(\bar X,\bar \MMM)$.
\end{proof}

Now we consider applications of Theorem~\ref{thm:cbQf} to $\QQ$-Fano threefolds.
For simplicity we consider only $\QQ$-Fanos whose Weil divisor class group $\Cl(X)$ has no torsions, i.e. $\Cl(X)\simeq \ZZ$. The collection of invariants 
$\left(\B(X), \qW(X), A^3\right)$ determines the Hilbert series 
\[
\Prm_{(X,A)}(t):= \sum_{n=0}^\infty \dim H^0(X,\OOO_X(nA))\cdot t^n.
\]
An abstract collection $\left(\B, q, \alpha\right)$, where $\B$ is a basket of terminal singularities, $q \in \ZZ_{>0}$, and $\alpha \in \QQ_{>0}$ is called a \textit{numerical candidate} if there are no numerical obstructions 
(like orbifold Riemann-Roch theorem \cite{Reid:YPG} and Bogomolov-Miyaoka inequality \cite{Kawamata:Div-contr}) for
existence of a $\QQ$-Fano threefold with corresponding invariants.

Computer search by using algorithm described in 
\cite{Suzuki-2004}, \cite{Brown-Suzuki-2007j}, \cite{P:2010:QFano}, \cite{P:2019:rat:Q-Fano} (see also \cite{GRD}) shows that 
there are at most 472 numerical candidates with $q\ge 3$. Among them Theorem~\ref{thm:cbQf} is applicable in 313 cases.
Similarly, there are at most 1 492 numerical candidates with $q=2$ and Theorem~\ref{thm:cbQf} is applicable in 382 cases.
We expect that in most of the remaining cases 
the corresponding $\QQ$-Fano threefolds (if they exist)
should have conic bundle structure. 
However proof of this needs a case by case considerations.
Let us consider just one example.

\begin{proposition}
Let $X$ be a $\QQ$-Fano threefold with $\qQ(X)=7$ and $A^3= 1/78$ (No. \textup{41478} in \cite{GRD}).
Then 
$X$ is rational. In particular, $X$ has a conic bundle structure.
\end{proposition}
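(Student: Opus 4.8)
Because $\qQ(X)=7$ falls just outside the range of Theorem~\ref{thm:rat}, the plan is to build an explicitly rational birational model of $X$ by means of the Sarkisov links of Section~\ref{sect-constr}, and then to deduce a conic bundle structure from rationality (any rational threefold is birational to $\PP^3$, which carries a conic bundle, namely the blow-up of a point projected to $\PP^2$). The first step is bookkeeping: from the graded ring of No.~\textup{41478} in \cite{GRD} I would record $\B(X)$, $A^3=1/78$, $\qW(X)=7$, and the plurigenera $\dim|mA|$. In particular $|A|=\varnothing$ and $\dim|2A|=\dim|3A|=0$, so any positive-dimensional movable linear system sits in $|mA|$ with $m\ge 4$, and then $1<\uplambda(X,|mA|)=7/m\le 7/4$.

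Assuming $\df(X)\ge 2$ (which the database confirms), fix a movable $\MMM\subseteq|mA|$ with $\dim\MMM\ge 2$; by Proposition~\ref{prop:search0} it is not composed of a pencil. Applying the construction~\eqref{diagram-main} and Lemma~\ref{lemma:MFS}: if $\bar f$ is of fibre type, then $\uplambda(X,\MMM)>1$ forces the general fibre of $\bar f$ to be $\PP^2$ or $\PP^1\times\PP^1$, so $\bar X$, and hence $X$, is rational. If $\bar f$ is birational we get a $\QQ$-Fano threefold $\hat X$ with $\uplambda(\hat X,\hat\MMM)>\uplambda(X,\MMM)$ by~\eqref{eq:constr:Klambda3}, and we iterate along a chain of links~\eqref{eq-seq}. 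As in Claim~\ref{claim:bir-map}, boundedness of $\QQ$-Fano threefolds combined with the strictly increasing $\uplambda$ forces the chain to terminate --- either at a fibre-type contraction (rational, as above), or at an intermediate $\hat X$ with $\qQ(\hat X)>7$ (rational by Theorem~\ref{thm:rat}), or at a terminal pair $(X',\MMM')$.

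The decisive point is the terminal case $(X',\MMM')$. By Theorem~\ref{thm:CF}, applied to a smooth del Pezzo member of $\MMM'$ exactly as in the proof of Corollary~\ref{lemma:DP}, $X'$ is rational unless it is the weighted hypersurface $X_6\subset\PP(1^2,2^2,3)$ or a del Pezzo threefold of degree $\le 3$. The dangerous outputs are thus the double Veronese cone and the quartic double solid (whose rationality is unknown) and a smooth cubic (which is irrational). Ruling all of them out is where the invariants of No.~\textup{41478} must be used: one tracks the collection $(\B,\qQ,A^3)$ through the relation~\eqref{equation-main} with $q=7$ --- the same kind of bookkeeping as in the Subcase $\qQ(X)=7$ analysis within the proof of Theorem~\ref{thm:cbQf} --- to show that these terminal models are numerically incompatible with the starting invariants. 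I expect this finite but delicate compatibility check to be the main obstacle. Note that the qualitative conclusion of Theorem~\ref{thm:cbQf} alone does not suffice, since it only yields a conic bundle structure (or a birational model that is a degree-$\le 2$ del Pezzo threefold), and a conic bundle structure does not by itself imply rationality. Once a rational model is produced, $X$ is rational, and the conic bundle structure follows.
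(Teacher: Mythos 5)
There is a genuine gap, and it starts with the input data. For No.~41478 the relevant invariants are $\B(X)=(2,3,13)$, $\dim|kA|=0$ for $1\le k\le 5$ (so $|A|\neq\varnothing$, contrary to your claim) and $\dim|6A|=1$; since $-(K_X+mA)$ is ample only for $m\le 6$, this gives $\df(X)=1$, not $\df(X)\ge 2$. Consequently there is \emph{no} movable linear system $\MMM$ with $\uplambda(X,\MMM)>1$ and $\dim\MMM\ge 2$: the only system you can feed into the link \eqref{diagram-main} is the pencil $|6A|$. This disables essentially your whole plan, because Proposition~\ref{prop:search0}, Lemma~\ref{lemma:MFS}, Claim~\ref{claim:bir-map} and Corollary~\ref{lemma:DP} all require $\MMM$ not to be composed of a pencil. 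In particular your treatment of the fibre-type case fails: for a pencil, $\bar\MMM$ need not be $\bar f$-horizontal, so the adjunction argument forcing the general fibre to be $\PP^2$ or $\PP^1\times\PP^1$ does not apply --- and indeed in the actual situation $\bar\MMM\sim 6\bar M_1$ is vertical. The terminal-pair endgame you identify as ``the decisive point'' is therefore moot; the chain of links never reaches a del Pezzo threefold, and the unexecuted ``compatibility check'' you defer is not where the content lies.

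The paper's proof is a single, fully numerical application of the link with $\MMM=|6A|$. Near the index-$13$ point one has $\MMM\sim 12(-K_X)$, so Lemma~\ref{lemma-cthreshold} gives $\beta_6\ge 12\alpha$ and $\beta_1\ge 2\alpha$; relation \eqref{equation-main} for $k=1$, together with $\hat q\le 7$ (otherwise $\hat X$, hence $X$, is rational by Theorem~\ref{thm:rat}), forces $s_1=0$ and $\alpha<1$, so by Theorem~\ref{theorem-Kawamata-blowup} $f$ is a Kawamata blowup; ruling out the points of index $2$ and $3$ pins down $\alpha=1/13$, $\beta_6=12\alpha$. Then \eqref{equation-main} for $k=6$ yields $s_6=0$, which is incompatible with $\bar f$ being birational since $\dim|6A|=1$. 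Hence $\bar f$ is a del Pezzo fibration over $\PP^1$ in which $\bar M_1$ (with $M_1\in|A|$, so $|A|\ne\varnothing$ is essential) is a fibre of multiplicity $6$; by the main result of \cite{MP:DP-e} the general fibre is then a del Pezzo surface of degree $6$, so $\bar X$ is rational. This multiple-fibre result is the decisive ingredient of the proof and is entirely absent from your proposal.
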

Note however that the existence of a $\QQ$-Fano threefold $X$ with these 
invariants is not known.
\begin{proof}
In this case the group $\Cl(X)$ is torsion free, 
$\B(X)=(2, 3, 13)$, $\dim |6A|=1$, and $\dim |kA|=0$ for $1\le k\le 5$.
Apply construction \eqref{diagram-main} with $\MMM=|6A|$. Then near the point of index $13$ we have 
$\MMM\sim 12(-K_X)$. Hence, $\beta_6\ge 12\alpha$ by 
Lemma~\ref{lemma-cthreshold} and so 
\[
\beta_1\ge \beta_6/6\ge 2\alpha. 
\]
The 
relation \eqref{equation-main} for $k=1$ has the form
\begin{equation} 
\label{equation-main:2-3-13}
\hat{q}=7 s_1+(7\beta_1- \alpha) e\ge 7 s_1+13\alpha e.
\end{equation}
Since $\hat q\le 7$, we have $s_1=0$ and $\alpha<1$.
Thus $f$ is the Kawamata blowup of a cyclic quotient singularity $P\in X$ of index $r=2$, $3$ or $13$. 
If $r=2$ or $3$, then $A\sim -K_X$ near $P$. Hence $\beta_1=\alpha+m_1$, where 
$m_1$ is an integer, $m_1\ge \alpha>0$. 
Then \eqref{equation-main:2-3-13} gives us
\begin{equation*} 
\hat{q}= 6\alpha e+ 7m_1e>7.
\end{equation*}
This contradicts our assumptions. Thus $r=13$ and so $A\sim 2(-K_X)$ near 
$P$. Hence $\beta_1=2\alpha+m_1$, where $m_1$ is a non-negative integer. Again, 
from \eqref{equation-main:2-3-13} we obtain
\begin{equation*} 
\hat{q}= 13\alpha e+ 7m_1e.
\end{equation*}
Thus $m_1=0$, $\beta_1=2\alpha$, and $\beta_6=12\alpha$. 
By Theorem~\ref{theorem-Kawamata-blowup}\ $\alpha=1/13$.
Now, the relation \eqref{equation-main} for $k=6$ has the form:
\begin{equation*} 
6 \hat{q}=7 s_6+(7\beta_6-6 \alpha) e= 7 s_6+6e.
\end{equation*}
Since $\hat q<8$, $s_6=0$. Since $\dim |6A|=1$, this implies that the contraction $\bar f$ is not birational, so it is a del Pezzo fibration. 
Since $s_1=0$, we have $\bar \MMM\sim 6\bar M_1$ and so $\bar M_1$ is a fiber of multiplicity $6$. By the main result of \cite{MP:DP-e} the general fiber of $\bar f$ is a del Pezzo surface of degree $6$. But then $\bar X$ must be rational, a contradiction.
\end{proof}

 \newcommand{\etalchar}[1]{$^{#1}$}
\def\cprime{$'$}


\end{document}